\title{A domain free of the zeros of the partial theta function}
\author{Vladimir Petrov Kostov\\
  Universit\'e de Nice, 
Laboratoire de Math\'ematiques, Parc Valrose,\\ 06108 Nice Cedex 2, France,  
e-mail: vladimir.kostov@unice.fr} 
\date{}
\newtheorem{tm}{Theorem}
\newtheorem{rem}[tm]{Remark}
\newtheorem{lm}[tm]{Lemma}
\newtheorem{prop}[tm]{Proposition}
\newtheorem{nota}[tm]{Notation}
\begin{document}

\maketitle 

\begin{abstract}
  We prove that for $q\in (0,1)$, the partial theta function
  $\theta (q,x):=\sum _{j=0}^{\infty}q^{j(j+1)/2}x^j$ has no zeros in the closed
  domain $\{ \{ |x|\leq 3\} \cap \{${\rm Re}$x\leq 0\}
  \cap \{ |${\rm Im}$x|\leq 3/\sqrt{2}\} \} \subset \mathbb{C}$ and no real zeros $\geq -5$.

{\bf Key words:} partial theta function, Jacobi theta function, 
Jacobi triple product\\ 

{\bf AMS classification:} 26A06
\end{abstract}

\section{Introduction}

The present paper deals with analytic properties of the
{\em partial theta function}

\begin{equation}\label{eqtheta}
  \theta (q,x):=\sum _{j=0}^{\infty}q^{j(j+1)/2}x^j~.
  \end{equation}
It owes its name to the resemblance between the function 
$\theta (q^2,x/q)=\sum _{j=0}^{\infty}q^{j^2}x^j$ and the
{\em Jacobi theta function} 
$\Theta (q,x):=\sum _{j=-\infty}^{\infty}q^{j^2}x^j$; ``partial'' refers to the fact
that summation in the case of $\theta$ takes place only from $0$ to $\infty$.

We consider the situation when the variable $x$ and the parameter $q$ are real,
more precisely, when $(q,x)\in (0,1)\times \mathbb{R}$. This function
has been studied also for $(q,x)\in (-1,0)\times \mathbb{R}$
and $(q,x)\in \mathbb{D}_1\times \mathbb{C}$; here $\mathbb{D}_1$ stands for
the open unit disk. For any fixed non-zero value of the parameter $q$
($|q|<1$),
the function
$\theta (q,.)$ is an entire function in $x$ of order~$0$.

The partial theta function finds various applications -- from
Ramanujan type $q$-series 
(\cite{Wa}) to the theory 
of (mock) modular forms (\cite{BrFoRh}), from asymptotic analysis (\cite{BeKi}) 
to statistical physics 
and combinatorics (\cite{So}). How $\theta$ can be applied to problems
dealing with asymptotics and modularity of partial and false theta 
functions and their relationship to representation theory and conformal field 
theory is made clear in \cite{CMW} and \cite{BFM}. The place which this
function finds in Ramanujan's lost notebook is explained
in~\cite{AnBe} and~\cite{Wa}.
Its Pad\'e approximants are studied in~\cite{LuSa}.

A recent interest in the partial theta function is connected with the study of 
section-hyperbolic polynomials, i.~e. real polynomials with positive coefficients, with
all roots real negative and all whose finite sections (i.e. truncations) 
have also this property, see \cite{KoSh}, \cite{KaLoVi} and \cite{Ost}; 
the cited papers use results of Hardy, Petrovitch and Hutchinson 
(see \cite{Ha}, \cite{Pe} and \cite{Hu}). Various analytic properties of the
partial theta function are proved in \cite{Ko2}-\cite{Ko13} and other papers
of the author.

The analytic properties of $\theta$ known up to now, in particular, the
behaviour of its zeros, are discussed in the next section. One of them
is the fact that for any $q\in (0,1)$, all complex conjugate
pairs of zeros of $\theta (q,.)$ remain within the domain

$$\{ {\rm Re}x\in (-5792.7, 0), |{\rm Im}x|<
132\} \cup \{ |x|<18\} ~.$$
For $q\in (-1,0)$, this is true for the domain
$\{ |${\rm Re}$x|<364.2, |${\rm Im}$x|<132\}$, see~\cite{Ko10} and~\cite{Ko8}.
In this sense the complex conjugate zeros of $\theta$ never go too far from the
origin. It is also true that they never enter into the unit disk,
see~\cite{Ko13} (but this property is false if $q$ and $x$ are complex, see
the next section). In the present paper we exhibit a convex domain
which contains the left unit half-disk, which is more than
$7$ times larger than the latter and which is free of zeros
of $\theta$ for any $q\in (0,1)$:

\begin{tm}\label{tmmain}
  For any fixed $q\in (0,1)$, the partial theta function has no zeros
  in the domain
 $\mathcal{D}:=\{ \{ |x|\leq 3\} \cap \{${\rm Re}$x\leq 0\}
  \cap \{ |${\rm Im}$x|\leq 3/\sqrt{2}
  \} \} \subset \mathbb{C}$ (with $3/\sqrt{2}=2.121320344\ldots$). 
  \end{tm}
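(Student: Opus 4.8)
\medskip
\noindent\emph{Proof strategy.}
Since $\theta (q,0)=1\neq 0$ and, as recalled above (\cite{Ko13}), $\theta (q,.)$ has no zeros in the closed unit disk, it suffices to show $\theta (q,x)\neq 0$ for $x\in \mathcal{D}$ with $1\leq |x|\leq 3$. Two identities will drive the argument. The first is the functional equation $\theta (q,x)=1+qx\,\theta (q,qx)$, obtained by reindexing, together with its telescoping form $(1-x)\theta (q,x)=1-\sum _{j\geq 1}(q^{T_{j-1}}-q^{T_j})x^j$, where $T_j=j(j+1)/2$ and the positive weights $q^{T_{j-1}}-q^{T_j}$ sum to $1$; for $|x|\leq 1$ the latter already forces $\theta (q,x)\neq 0$, and it is well suited to small $q$. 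The second is the Jacobi triple product: writing it for $\Theta $ at $(\sqrt{q},\sqrt{q}\,x)$ and separating the parts $n\geq 0$ and $n<0$ of the bilateral series yields $\theta (q,x)=\prod _{m\geq 1}(1-q^m)(1+q^mx)(1+q^{m-1}/x)-(1/x)\,\theta (q,1/x)$, which I will use when $q$ is not small.

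First I would isolate the small-$q$ range $q\in (0,q_1]$ with an explicit $q_1<1/3$: there $q|x|<1$ on $\mathcal{D}$, so the tail of $\theta (q,qx)$ is dominated by a convergent geometric series, and combining this with $|1+qx|\geq 1-3q$ in the functional equation keeps $|\theta (q,x)|$ bounded away from $0$. This range also settles the real claim for $q\leq 1/5$: the grouping $\theta (q,-y)=\sum _{k\geq 0}q^{k(2k+1)}y^{2k}(1-q^{2k+1}y)$ shows $\theta (q,-y)>0$ whenever $0<y<1/q$, hence for all $0<y\leq 5$ when $q\leq 1/5$; for larger $q$ the remaining interval $1/q\leq y\leq 5$ is treated by estimates of the same kind together with the triple-product identity.

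For $q\in [q_1,1)$ I would split $\mathcal{D}$ into a thin strip $|{\rm Im}\,x|<\delta$ about the real axis and its complement. On the strip, the real-variable analysis above keeps $\theta (q,-y)$ positive on the real segment, and an explicit perturbation estimate then shows that ${\rm Im}\,\theta (q,x)$ and ${\rm Re}\,\theta (q,x)$ cannot vanish simultaneously. Off the strip I would use the triple-product identity: the point $1/x$ lies in the closed left half of (essentially) the unit disk, where $\theta (q,1/x)$ is zero-free and quantitatively controlled (close to $x/(x-1)$ when $q$ is near $1$), so $(1/x)\theta (q,1/x)=O(1)$; meanwhile the product $\prod _{m\geq 1}(1-q^m)(1+q^mx)(1+q^{m-1}/x)=\Theta (\sqrt{q},\sqrt{q}\,x)$ is an explicit object whose only zeros are the negative reals $x=-q^{-m}$ and $x=-q^{m-1}$. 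Since off the strip $|1+q^mx|$ and $|x+1|$ are bounded below, one shows the product cannot equal $(1/x)\theta (q,1/x)$, whence $\theta (q,x)\neq 0$.

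The decisive difficulty is the regime $q\to 1^-$ with $|x|$ near $3$: no fixed truncation of $\theta $ helps there, since its partial sums oscillate with amplitude tending to infinity, so the lower bound for $|\Theta (\sqrt{q},\sqrt{q}\,x)|$ away from its real zeros must be made \emph{uniform} in $q$. I would obtain it from the modular (imaginary Jacobi) transformation of $\Theta $, which turns the limit $q\to 1^-$ into a regular one and fixes the order of magnitude of $\Theta (\sqrt{q},\sqrt{q}\,x)$; checking that this dominates the $O(1)$ term $(1/x)\theta (q,1/x)$ precisely on the part of $\mathcal{D}$ cut out by $|{\rm Im}\,x|\leq 3/\sqrt{2}$ and $|x|\leq 3$ — which keeps $\sqrt{q}\,x$ off the zero locus of $\Theta $ — is the technical heart of the argument, and this is where the constant $3/\sqrt{2}$ enters. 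Finally, the statement about real zeros $\geq -5$ follows from the real-variable analysis used in the second and third steps.
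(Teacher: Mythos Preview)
Your overall architecture (split by $q$-range, use the triple product for $q$ near $1$) matches the paper, but the heart of your large-$q$ argument has the inequality backwards. You want to show that $|\Theta (\sqrt q,\sqrt q\,x)|=|\Theta^*(q,x)|$ dominates the ``$O(1)$ term'' $(1/x)\theta(q,1/x)=G(q,x)$, and you propose the modular transformation to secure a uniform \emph{lower} bound on $|\Theta^*|$. But for $x$ in any compact set away from the real axis, $|\Theta^*(q,x)|\to 0$ as $q\to 1^-$: the Euler factor $\prod_{m\ge 1}(1-q^m)$ decays like $\exp(-\pi^2/6(1-q))$, and the remaining factors $\prod|1+xq^m|\cdot\prod|1+q^{m-1}/x|$ cannot compensate on $\mathcal D$ (the paper checks this numerically, e.g.\ $|\Theta^*|<0.123$ for $q\ge 0.75$ on the relevant boundary). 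So a lower bound on $|\Theta^*|$ uniform in $q$ simply does not exist, and the modular transformation, if you carried it out, would confirm this rather than rescue it. The correct comparison is the reverse: one shows $|G|>|\Theta^*|$, using that $|G|=|(1/x)\theta(q,1/x)|$ is bounded \emph{below} (the paper gets $|G|\ge 0.147-0.0208$ on the horizontal segments via a finite truncation of the $X=1/x$ series, and $|G|\ge 1/6$ on the arc $|x|=3$).

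Two smaller points. First, the paper reduces to the boundary $\partial\mathcal D$ only, via continuity of the zeros in $q$ and the fact that for small $q$ all zeros are real and $<-5$; you instead attack the full annulus $1\le|x|\le 3$ inside $\mathcal D$, which is harder and unnecessary. Second, your small-$q$ step ``$|1+qx|\ge 1-3q$ in the functional equation keeps $|\theta(q,x)|$ bounded away from $0$'' does not work as written: the functional equation $\theta(q,x)=1+qx\,\theta(q,qx)$ gives $|\theta(q,x)|\ge 1-q|x|\,|\theta(q,qx)|$, which needs an \emph{upper} bound on $|\theta(q,qx)|$, not a lower bound on $|1+qx|$; this is salvageable (e.g.\ $|\theta(q,qx)|\le 1/(1-q|x|)$ when $q|x|<1$), but only yields positivity for roughly $q<1/6$, so you would still need the paper's direct truncation estimates to push $q_1$ up to a value where the triple-product regime takes over.
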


When only the real zeros of $\theta$
are dealt with, one can improve the above theorem: 

\begin{prop}\label{propmain}
  For any $q\in (0,1)$ fixed, the function $\theta (q,.)$ has no
  real zeros~$\geq -5$.
\end{prop}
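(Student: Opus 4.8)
The plan is to prove Proposition~\ref{propmain} by showing that for $x\in[-5,0)$ and $q\in(0,1)$, the value $\theta(q,x)$ is bounded away from zero --- in fact strictly positive once $x$ is in this range. First I would split off the first few terms and write
\[
\theta(q,x)=1+qx+\sum_{j\geq 2}q^{j(j+1)/2}x^j.
\]
On the interval $x\in[-5,0)$ the term $1+qx$ can of course be negative (e.g. $q$ close to $1$, $x$ close to $-5$), so the naive estimate fails; the real work is to control the tail $\sum_{j\geq 2}q^{j(j+1)/2}x^j$ and exploit cancellation among consecutive terms. The natural device is to group the series in consecutive pairs: for $j\geq 1$ write the $(2j)$-th and $(2j+1)$-th terms together as $q^{j(2j+1)}x^{2j}\bigl(1+q^{2j+1}x\bigr)$. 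For a fixed $x<0$, the bracket $1+q^{2j+1}x$ is positive as soon as $q^{2j+1}<1/|x|$, i.e. once $j$ is large enough, while $q^{j(2j+1)}x^{2j}>0$ always; so all but finitely many pairs are positive, and one is left to check a finite initial segment.

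The cleaner route, which I would actually pursue, is to bound $|q^{j(j+1)/2}x^j|$ for $j\geq 2$ by a geometric-type series and show the tail is dominated by $1+qx$ plus the leading recovered positivity. Concretely, set $r=q|x|$; the ratio of the modulus of the $(j{+}1)$-st term to that of the $j$-th is $q^{j+1}|x|$, which for $|x|\leq 5$ is $<1$ precisely when $q^{j+1}<1/5$. Thus if $q\leq 5^{-1/3}\approx 0.585$, already from $j=2$ on the moduli are strictly decreasing and one can sum the geometric majorant: $|\sum_{j\geq 2}q^{j(j+1)/2}x^j|\leq q^3|x|^2/(1-q^3|x|)$, and a short computation shows $1+qx+(\text{this bound})$ stays positive on $[-5,0)$. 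The hard part --- and this is where I expect the main obstacle --- is the complementary regime $q$ close to $1$ (say $q\in[5^{-1/3},1)$), where no single geometric bound works and one genuinely needs the oscillation: here I would use the pairing $1+q^{2j+1}x>0$ for the appropriate $j$'s together with an explicit estimate that the "bad" partial sum $1+qx+q^3x^2+q^6x^3+\cdots$ up to the first index where positivity kicks in is still $>0$. This finite check will involve the monotone dependence on $q$ of each truncation, which one can analyze by differentiating in $q$ term by term, or by invoking known sign patterns of the sections of $\theta$ from the cited work of the author.

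An alternative and possibly slicker approach is to use the functional/multiplicative structure: one has $\theta(q,x)=\theta(q^3,q^2x)+qx\,\theta(q^3,q^4x)$ (obtained by splitting the sum according to $j\bmod 3$), or more simply the recursion $\theta(q,x)=1+qx\,\theta(q,qx)$. Iterating the latter gives $\theta(q,x)=1+qx+q^3x^2+q^3x^2\cdot q^2x\,\theta(q,q^3x)$ and so on, expressing $\theta(q,x)$ as a finite positive-coefficient polynomial in $x$ of the "section" type plus a remainder $q^{k(k+1)/2}x^k\theta(q,q^kx)$ with $k$ chosen so that $q^k|x|<1$; for such arguments $\theta(q,y)$ with $y\in(-1,0)$ is known to be positive (indeed $\theta$ has no zeros in the unit disk on the negative axis, a fact recalled in the introduction), so the remainder has the sign of $x^k$, which we can make positive by taking $k$ even. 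Then everything reduces to showing the explicit section $\sum_{j=0}^{k-1}q^{j(j+1)/2}x^j$ is positive on $[-5,0)$ for the finitely many relevant $k$'s, which is a concrete one-variable calculus problem: for each such polynomial, check that its minimum over $q\in(0,1)$ (for fixed $x$) stays above zero, e.g. by showing the derivative in $q$ has controlled sign or by a direct interval estimate. I would expect the case analysis on how large $k$ must be taken (which depends on $|x|$, worst at $|x|=5$) to be the fiddly part, but each individual sub-case should be routine.
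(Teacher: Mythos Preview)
Your proposal contains a genuine gap: none of the three routes can handle the regime $q\to 1^-$, which is exactly where the difficulty lies (the paper flags this explicitly in its Comments section).

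In the pairing approach, the number of ``bad'' indices $j$ with $1+q^{2j+1}x<0$ is roughly $\log|x|/\log(1/q)$, hence unbounded as $q\to 1^-$; there is no finite initial segment to check. In the geometric-bound approach, your concrete inequality already fails inside the claimed ``easy'' range: at $q=0.5$, $x=-5$ one has $1+qx=-1.5$, so $1+qx-q^3|x|^2/(1-q^3|x|)<0$ and the estimate does not give positivity. In the recursion approach, the $k$ needed for $q^k|x|<1$ again tends to infinity as $q\to 1^-$, so there are not ``finitely many relevant $k$'s''; worse, the sections $\sum_{j=0}^{k-1}q^{j(j+1)/2}(-5)^j$ are not all positive --- for $k=4$, $q=0.9$ the section is $1-4.5+18.225-66.43\approx -51.7$ --- so the decomposition ``positive section plus positive remainder'' is simply false.

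What the paper does for $q$ near $1$ is entirely different. It first reduces, via continuity of the zeros in $q$ and the known spectral-value mechanism (real zeros can only merge into complex pairs, never be born), to showing $\theta(q,-5)>0$ for all $q\in(0,1)$. For $q\in(0,0.8]$ a truncation estimate of the type you sketch suffices. For $q\in[0.8,1)$ the paper writes $\theta=\Theta^*-G$ with $\Theta^*(q,x)=\sum_{j\in\mathbb{Z}}q^{j(j+1)/2}x^j$ the bilateral theta and $G(q,x)=\sum_{j\le -1}q^{j(j+1)/2}x^j$. The point is that $G$ is a series in $1/x$ converging \emph{uniformly} in $q\in[0,1]$, and a Leibniz estimate gives $-G(q,-5)\ge 1/5 - q/25>4/25$. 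Meanwhile $\Theta^*$ admits the Jacobi triple product, which contains the factor $\prod_{m\ge 1}(1-q^m)$; this, together with a counting argument balancing the large and small factors $|(1-5q^m)(1-q^m/5)|$, forces $|\Theta^*(q,-5)|<10^{-4}$ on $[0.8,1)$. Hence $\theta(q,-5)>4/25-10^{-4}>0$. The idea your proposal is missing is precisely this passage through the bilateral theta and its product form, which replaces the slowly converging power series by objects that behave well as $q\to 1^-$.
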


Before giving comments on these results in the next section we explain
the structure of the paper. Section~\ref{secremind}
reminds certain analytic properties of $\theta$. Proposition~\ref{propmain}
is proved in Section~\ref{prpropmain}. In Section~\ref{secprelim} we prove 
some lemmas which are used to prove Theorem~\ref{tmmain}; their proofs
can be skipped at first reading. Section~\ref{secplan} contains a plan
of the proof of Theorem~\ref{tmmain}. The proofs of the proposition and lemmas
formulated in Section~\ref{secplan} can be found in
Section~\ref{secproofs}.

\section{Comments\protect\label{seccomments}}

Throughout the paper we use the following notation:

\begin{nota}\label{notat}
  {\rm We define four arcs of the circle centered at $0$ and of radius~$3$:}

$$C_k:=\{ x\in \mathbb{C}||x|=3,
    {\rm arg}x\in [\pi /2+(k-1)\pi /4,\pi /2+k\pi /4]\} ~,~~~k=1,~2,~3,~4~.$$
{\rm We set $w:=3/\sqrt{2}=2.121320344\ldots$. The border
$\partial \mathcal{D}$ of the domain $\mathcal{D}$ defined in
Theorem~\ref{tmmain} consists
of the arc $C_2\cup C_3$, the horizontal segments $S_{\pm}:=[-w\pm wi,\pm wi]$
and the vertical segment $S_v:=[-wi,wi]$. We parametrise the segment
$S_+$ by setting $x:=-t+wi$, $t\in [0,w]$.}
\end{nota}

One can make the following observations with regard to Theorem~\ref{tmmain}
and Proposition~\ref{tmmain}:
\vspace{1mm}

(1) It is not clear whether Theorem~\ref{tmmain} should hold true for
    the whole of the left half-disk of radius $3$, because 
    $|\theta (0.71,e^{0.5188451144\pi i})|=0.0141\ldots$, i.~e. one obtains
    a very small value of $|\theta |$ for a point of the arc $C_1$. This
    might mean that a zero of $\theta$ crosses the arc $C_1$ for $q$ close to
    $0.71$.
    \vspace{1mm}
    
    (2) The difficulty to prove results as the ones of Theorem~\ref{tmmain} and
    Proposition~\ref{propmain} resides in the fact that the
    rate of convergence of the series of $\theta$ decreases as $q$
    tends to $1^-$,
    and for $q=1$, one obtains as limit of $\theta$ the rational (not entire)
    function $1/(1-x)$.
    It is true that the series
    of $\theta$ converges to the function $1/(1-x)$ (which has no zeros at all)
    on a domain larger than the unit disk
    and containing the domain $\mathcal{D}$, see~\cite{Ka}. Yet one disposes
    of no concrete estimations about this convergence, so one cannot deduce
    from it the absence of zeros of $\theta$ in the domain~$\mathcal{D}$ for
    all $q\in (0,1)$.
    \vspace{1mm}
    
    (3) The domain $\mathcal{D}$ contains the left half-disk of radius
    $3/\sqrt{2}>2$. The ratio of the surfaces of $\mathcal{D}$ and of the
    left unit half-disk is
    $(\pi 3^2/4+(3/\sqrt{2})^2)/(\pi /2)=7.364788974\ldots$.
    \vspace{1mm}
    
    (4) One knows that for $q=0.3092\ldots$, the function $\theta (q,.)$
    has a double
real zero $-7.5032\ldots$, see~\cite{KoSh}. Pictures of the zero set of the
function $\theta$ (see \cite{Ko12}) suggest that for certain values of
$q\in (0,1)$, it has
a zero in the interval $(-7,-6)$, so Proposition~\ref{propmain} cannot be
made much stronger.
\vspace{1mm}

We explain by examples why analogs of
the property of the zeros of $\theta$ to avoid
the domain $\mathcal{D}$ cannot be found in cases other than
$q\in (0,1)$, $x\leq 0$:
\vspace{1mm}

(i) If $q$ is complex, then some of the zeros of $\theta$ can be of modulus
$<1$. Indeed, for $q=\rho e^{3\pi i/4}$, where $\rho \in (0,1)$
is close to $1$, the function $\theta$ has a zero close to
$0.33\ldots +0.44\ldots i$
whose modulus is $0.56\ldots <1$. Similar examples 
can be given for any $q$ of the form
$\rho e^{k\pi i/\ell}$, $k$, $\ell \in \mathbb{Z}^*$, see~\cite{Ko13}.
It is true however that $\theta$ has no zeros for $|x|\leq 1/2|q|$, see
Proposition~7 in~\cite{Ko1}.
\vspace{1mm}

(ii) If $q\in (0,1)$, the function $\theta$
has no positive zeros, but $\theta (0.98,.)$ is likely to have a zero close
to $1.209\ldots+0.511\ldots i$ (i.~e. of modulus $1.312\ldots$),
see~\cite{Ko13}. Conjecture: {\em As $q\rightarrow 1^-$, one can find complex
  zeros of $\theta (q,.)$ as close to $1$ as possible.} One can check
numerically that for $q$ close to $0.726475$, $\theta$ has a complex
conjugate couple of zeros close to $\pm 2.9083\ldots i$ (which by the way
corroborates the idea that the statement of Theorem~\ref{tmmain} cannot be
extended to the whole of the left half-disk of radius $3$).   
  Thus a convex domain free of zeros of $\theta$ should belong to the
rectangle $\{ {\rm Re}x\in (0,1)$, $|{\rm Im}x|<2.9083\ldots \}$.
\vspace{1mm}

(iii) For
$q\in (-1,0)$, it is true that the leftmost of the positive zeros of $\theta$
tends to $1^+$ as $q$ tends to $-1^+$, see part (2) of
Theorem~3 in~\cite{Ko12}. The function
$\theta (-0.96,.)$ is supposed to have a couple of conjugate zeros
close to the zeros
$z_{\pm}:=0.824\ldots \pm 1.226\ldots i$ (of modulus $1.478\ldots$)
of its truncation
$\theta _{100}^{\bullet}(-0.96,.)$; when truncating, the first two skipped terms
are of modulus
$6.57\ldots \times 10^{-75}$ and $1.51\ldots \times 10^{-76}$.
As $q\rightarrow -1^+$, the limit of $\theta$ equals $(1-x)/(1+x^2)$.
One can suppose that the zeros, which equal $z_{\pm}$ for $q=-0.96$,
tend to~$\pm i$
as $q\rightarrow -1^+$. One knows that for $q\in (-1,0)$,
complex zeros do not cross the imaginary axis, see Theorem~8 in~\cite{Ko12}.
Hence these zeros of $\theta$ should remain in the right half-plane and close
to~$\pm i$. This means that it is hard to imagine a convex domain in the right
half-plane much larger than the right unit half-disk and free of zeros of
$\theta$.

As for the left half-plane, the truncation $\theta _{100}^{\bullet}(-0.96,.)$ of
$\theta (-0.96,.)$ has conjugate zeros $0.769\ldots \pm 1.255\ldots i$
(of modulus $1.473\ldots$) about which, as about $z_{\pm}$ above,
one can suggest that they tend to $\pm i$ as $q\rightarrow -1^+$. 
This could make one think that if one wants to find a
domain in the left half-plane containing the left unit half-disk and free of
zeros of $\theta$, then in this domain 
the modulus of the imaginary part should not be larger than $1$.
On the other hand $\theta (-0.7,.)$ has a zero close to $w_0:=-2.69998\ldots$
so the width of
the desired domain should be $<|w_0|$.

\section{Known properties of the partial theta function
  \protect\label{secremind}}

In this section we remind first that the Jacobi theta function satisfies the
{\em Jacobi triple product}

$$\sum _{j=-\infty}^{\infty}q^{j^2}x^{2j}=
\Theta (q,x^2)=\prod _{m=1}^{\infty}(1-q^{2m})(1+x^2q^{2m-1})(1+x^{-2}q^{2m-1})$$
from which we deduce the equalities

\begin{equation}\label{eqtriple}
  \begin{array}{rcl}
    \Theta ^*(q,x):=\Theta (\sqrt{q},\sqrt{q}x)&=&
  \sum _{j=-\infty}^{\infty}q^{j(j+1)/2}x^j=
  \prod _{m=1}^{\infty}(1-q^m)(1+xq^m)(1+q^{m-1}/x)\\ \\ 
  &=&(1+1/x)\prod_{m=1}^{\infty}((1-q^m)(1+xq^m)(1+q^m/x))~.\end{array}
  \end{equation}
It is clear that

\begin{equation}\label{eqthetaG}
  \theta =\Theta ^*-G~~~\, {\rm with}~~~\,
  G(q,x):=\sum _{j=-\infty}^{-1}q^{j(j+1)/2}x^j~.
\end{equation}

\begin{nota}\label{notaxXt}
  {\rm (1) When treating the function $G$ we often change the variable $x$ to
    $X:=1/x$. To distinguish the truncations of the function $\theta$ in
    the variable $x$ from the ones in the variable $t$
    (see Notation~\ref{notat}) we write
    $\theta =\theta _k^{\bullet}+\theta _*^{\bullet}$, where
    $\theta _k^{\bullet}:=\sum _{j=0}^kq^{j(j+1)/2}x^j$ and
    $\theta _*^{\bullet}:=\sum _{j=k+1}^{\infty}q^{j(j+1)/2}x^j$, i.~e. we use the
    superscript ``bullet'' when in the variable $x$ (no index $k$ is added to $\theta _*^{\bullet}$).
    No superscript is used for the truncations
    of $\theta (q,-t+wi)$ and of $G$.
    
    (2) We set $\lambda :=3e^{3\pi i/4}$, $R(q,x):=\prod _{m=1}^{\infty}(1+q^{m-1}/x)$, $M:=|(1+qx)(1+q/x)|$, $M_0:=(1-q)M$ and $M_1(q,t):=M_0(q,-t+wi)$.}
  \end{nota}

\begin{rem}
  {\rm In the proofs we use the convergence of the series (\ref{eqtheta})
    when the
    parameter $q$ belongs to an interval of the form $[0,a]$, $a\in (0,1)$.
    When we need to deal with intervals of the form $[a,1]$, we use the
  equalities (\ref{eqthetaG}) in which the modulus of
  the term $\Theta ^*$ tends to $0$ as $q$
  tends to $1^-$ while the series of $G$ converges uniformly for
  $|x|\in [c,\infty )$ for any fixed $c>1$. When in the proof of a lemma or a proposition we use the fact that a certain function in one variable (mainly a polynomial) is increasing or decreasing, we do not give a detailed proof of this, because in all such cases the proof can be given using elementary methods (computation of derivatives and numerical computation of their real roots). We do not give details when proving the absence of critical points of polynomials in two variables in given rectangles. In this text their degree is never too high and the necessary computations are easily performed using MAPLE.}
  \end{rem}

For $q\in (0,1)$, the real zeros of $\theta$ (which are all negative)
and of any of its derivatives
w.r.t. the variable $x$ form a sequence tending to $-\infty$ and
behaving asymptotically as a geometric progression with ratio~$1/q$,
see Theorem~4 in~\cite{Ko1}.

There exists an increasing and tending to $1^-$ sequence of
{\em spectral values} 
$\tilde{q}_j$ of $q$ such that $\theta (\tilde{q}_j,.)$ has a multiple
(more exactly double) real zero, see~\cite{KoSh}.
The $6$-digit truncations of the first $6$
spectral values are:  

$$0.309249~,~~~0.516959~,~~~0.630628~,~~~
0.701265~,~~~0.749269~,~~~0.783984~.$$
When $q$ passes from $\tilde{q}_j^-$ to $\tilde{q}_j^+$, the rightmost two of
the real zeros of $\theta$ coalesce and then form a complex conjugate pair.
All other real zeros of $\theta$ remain negative and distinct, see Theorem~1
in~\cite{Ko1}. The inverse
(complex couples becoming double and then two distinct real zeros) never
happens. No zeros are born at~$\infty$.

Thus for $q$ fixed, the function $\theta$ belongs to the Laguerre-P\'olya
class $\mathcal{L-P}I$ exactly if $q\in (0,\tilde{q}_1]$. For
  $q\in (\tilde{q}_j,\tilde{q}_{j+1}]$, the function $\theta$ is the product of
  a real polynomial of degree $2j$ without real zeros and a function of the
  class $\mathcal{L-P}I$. See the details in~\cite{Ko1A}.

  Spectral values exist also for $q\in (-1,0)$, see~\cite{Ko5}. The existence
  of spectral values for complex values of $q$ is proved in~\cite{Ko5}, see
  Proposition~8 therein.

  At the end of this section we mention the fact that the function
  $\theta$ satisfies the two conditions 

  $$\theta (q,x)=1+qx\theta (q,qx)~~~\, {\rm and}~~~\,
  2q\partial \theta /\partial q=2x\partial \theta /\partial x+
  x^2\partial ^2\theta /\partial x^2~.$$

\section{Proof of Proposition~\protect\ref{propmain}\protect\label{prpropmain}}

 For $q\leq 0.1$, all zeros of $\theta (q,.)$ are real negative and smaller than
 $-1/q$, see Proposition~7 in~\cite{Ko1}. Hence they are smaller than $-5$ and
 one has $\theta (q,x)>0$ for $x\in [-1/q,0]\supset [-5,0]$.
 As $q$ increases, its zeros
  depend continuously on $q$. For a spectral value of $q$, 
  certain zeros coalesce to form then
  a complex conjugate pair, but new real zeros are not born,
  see the previous section. Therefore it
  suffices to show that for $q\in (0,1)$, one has $\theta (q,-5)>0$.

  For $q\in (0,0.8]$, one finds numerically that $\theta (q,-5)$ is larger
          than $0.04$. To this end one can consider
          $\theta _{15}^{\bullet}(q,-5):=\sum _{j=0}^{15}q^{j(j+1)/2}(-5)^j$
          which is a polynomial
          in $q$ and show that for $q\in (0,0.5]$ (resp. for $q\in [0.5,0.8]$),
            it is larger than $0.05$ (resp. larger than $0.16$).
            The sum of the moduli
        of all skipped terms is smaller than $1.8\times 10^{-30}$ and
        $0.07$ respectively.

        To prove that $\theta (q,-5)>0$ for $q\in [0.8,1)$, we first show that
          $-G(q,-5)>4/25$. This is true, because $-G(q,-5)$ is a Leibniz series
          with first two terms $1/5$ and $-q/25$, so its sum is
          $\geq 1/5-q/25>1/5-1/25=4/25$.

          Now we estimate $|\Theta ^*|$. We set

          $$K(q):=(1+qx)(1+q/x)|_{x=-5}=(1-5q)(1-q/5)=1-26q/5+q^2~.$$
          Hence $\Theta ^*(q,-5)=(4/5)\prod _{m=1}^{\infty}(1-q^m)
          \prod _{m=1}^{\infty}K(q^m)$, see (\ref{eqtriple}).
          The graph of the function $|K|$
          (for $q\in [0,1]$) is shown in
          Fig.~\ref{figmodulusK}.

          \begin{figure}[htbp]
\centerline{\hbox{\includegraphics[scale=0.7]{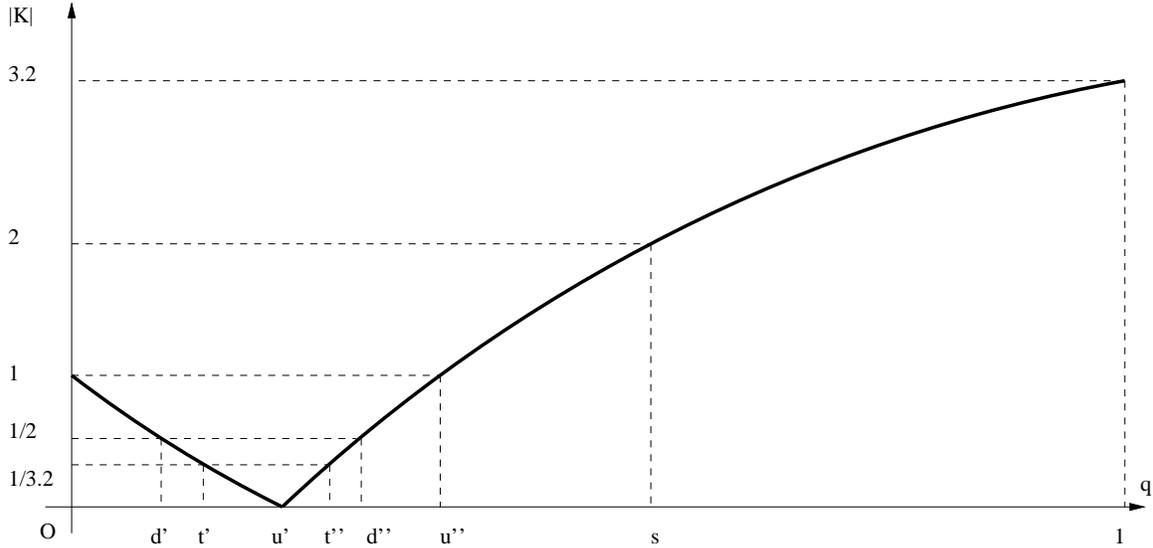}}}
\caption{The graph of the function $|K|:=|1-26q/5+q^2|$.}
\label{figmodulusK}
          \end{figure}
          The function $|K|$ is decreasing on $[0,u']$ and increasing on
          $[u',1]$, where $u'=0.2$. One has $|K(0)|=|K(u'')|=1$,
          $u''=0.4182575771\ldots$, and $|K(1)|=3.2$.
          In Fig.~\ref{figmodulusK} we show the values
          $0.1357556939\ldots =:t'<t'':=0.2660119966\ldots$ of $q$ for which
          $|K|=1/3.2$, the values
          $0.09800079936\ldots =:d'<d'':
          =0.3065310118\ldots$ where
          $|K(q)|=1/2$ and the value $s:
          =0.6609280570\ldots$ of $q$
          such that $|K(q)|=2$.

          For $a\in (0,1]$, set $\phi (a):=\ln (1/a)/\ln (1/q)$, so
        $\phi (a)\geq 0$ and $\phi (a_1)-\phi (a_2)=\phi (a_1/a_2)$.
        We remind that
        
        \begin{equation}\label{eq[]}
          {\rm for}~~~\, b, c\in \mathbb{R}~,~~~\, 
        [b-c]=[b]-[c]-\chi (b,c)~,~~~\, {\rm where}~~~\, \chi (b,c)=0
        ~~~\, {\rm or}~~~\, 1~;\end{equation}
        here $[.]$ stands for the
        integer part. Denote by $\ell _1$, $\ell _2\in \mathbb{N}$
        the maximal indices
          $\ell$ for which
          one has $q^{\ell}\geq u''$ and $q^{\ell}\geq s$
          respectively:

          $$\ell _1=[\phi (u'')]~~~\, {\rm and}~~~\, \ell _2=[\phi (s)]~.$$ 
          These are the numbers of terms of the sequence $\{ q^m\}$ belonging
          to the intervals $[u'',1)$ and $[s,1)$. For the intervals
          $[t',t'')$ and $[d',d'')$, these numbers are equal to

          $$m_1:=[\phi (t')]-[\phi (t'')]~~~\, {\rm and}~~~\,
          m_2:=[\phi (d')]-[\phi (d'')]~.$$
          One computes directly that

          $$\ln (t''/t')=0.672\ldots >0.414\ldots =\ln (1/s)~,~~~\,
          {\rm thus}~~~\, \ln (t''/t')-\ln (1/s)>0.25~.$$
          Moreover, as $q\in [0.8,1)$, one has
            $1/\ln (q)\geq 1/\ln (0.8)=4.48\ldots >4$, so

            $$\begin{array}{l}
              \ln (t''/t')/\ln (1/q)-\ln (1/s)/\ln (1/q)>1~~~\, {\rm hence}~~~\,
                  [\phi (t'/t'')]>[\phi (s)]~~~\, {\rm and}\\ \\
                  m_1=[\phi (t')]-[\phi (t'')]=[\phi (t'/t'')]+
                  \chi (\phi (t'),\phi (t''))
                  >[\phi (s)]=\ell _2~.\end{array}$$
            This means that out of the factors
            $|K(q^m)|$ which are present in $|\Theta ^*(q,-5)|$,
            the ones which are in the interval $[2,3.2)$ are less than
              the ones which are in $[0,1/3.2]$. We denote their sets by
              $\Sigma _{[2,3.2)}$ and $\Sigma _{[0,1/3.2]}$ and we write
                $\sharp (\Sigma _{[2,3.2)})<\sharp (\Sigma _{[0,1/3.2]})$.
                  The latter inequality implies 

                  $$\prod _{|K(q^m)|\in \Sigma _{[2,3.2)}\cup \Sigma _{[0,1/3.2]}}|K(q^m)|<1~.$$
Using the above notation and (\ref{eq[]}) one can write

\begin{equation}\label{eqphi}
  \begin{array}{lllll}
                    \sharp (\Sigma _{[1,2)})&=:&\ell _3&=&[\phi (u'')]-
                      [\phi (s)]\leq [\phi (u''/s)]+1~~~\, {\rm and}\\ \\
                      \sharp (\Sigma _{(1/3.2,1/2]})&=:&m_3&\geq &[\phi (d')]-
                           [\phi (t')]+[\phi (t'')]-
                           [\phi (d'')]-1~.\end{array}
  \end{equation}
                  To prove the latter inequality one has to observe
                  that the numbers $q^m$
                  corresponding to factors $|K(q^m)|$ from the set
                  $\Sigma _{(1/3.2,1/2]}$ belong to the union
                $[d',t')\cup (t'',d'']$. The numbers $q^m$ which belong to the
                interval 
            $[d',t')$ are exactly $[\phi (d')]-[\phi (t')]$. The ones that
              are in $(t'',d'']$ are not less than
            $[\phi (t'')]-[\phi (d'')]-1$ (at most one of them 
            equals $t''$ and there are exactly
            $[\phi (t'')]-[\phi (d'')]$ numbers
            $q^m$ in $[t'',d'')$). One finds that 
$$
  \phi (u''/s)=0.457\ldots <\phi (d'/t')+\phi (t''/{d'}')=0.467\ldots ~,~~~\,
          {\rm so}$$
 $$[\phi (u''/s)|\leq [\phi (d'/t')]+[\phi (t''/d'')]+2\leq m_3+5$$
            (see the equalities and inequalities (\ref{eqphi}) and
            (\ref{eq[]})) and one
            concludes that $\ell _3\leq m_3+6$. The factors $|K(q^m)|$
            which have not been
            mentioned up to now are all of modulus $<1$; the corresponding 
            numbers $q^m$ belong to the intervals $(0,d')$ and $(t'',u'')$. Thus
            $\prod _{m=1}^{\infty}|K(q^m)|<2^6$. At the same time

            $$\prod_{m=1}^{\infty}(1-q^m)\leq \prod_{m=1}^{\infty}(1-0.8^m)<
            7\times 10^{-6}~.$$
            This shows that $|\Theta ^*(q,-5)|<10^{-4}<4/25<|-G(q,-5)|$
            from which
            the proposition follows.
            

\section{Some technical lemmas\protect\label{secprelim}}

In this section we formulate and prove several lemmas:

\begin{lm}\label{lmG}
  For $q\in [0,1]$ and $|x|=a>2$, one has $|G|\geq (a-2)/a(a-1)$.
  In particular, for $a=3$, $|G|\geq 1/6$. 
\end{lm}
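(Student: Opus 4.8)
The plan is to work with the variable $X:=1/x$, so that $|X|=1/a<1/2$, and to write $G$ as a geometric-type series in $X$. From the definition $G(q,x)=\sum_{j=-\infty}^{-1}q^{j(j+1)/2}x^j$, reindexing with $j=-n$, $n\geq 1$, gives $G=\sum_{n=1}^{\infty}q^{n(n-1)/2}X^{n}$, whose first term is $X$ and whose remaining terms are all multiplied by positive powers of $q$ together with at least $X^2$. The idea is therefore to isolate the dominant term $X$ and bound the tail: $|G|\geq |X|-\sum_{n=2}^{\infty}q^{n(n-1)/2}|X|^{n}\geq |X|-\sum_{n=2}^{\infty}|X|^{n}$, using $q\in[0,1]$ so that every $q^{n(n-1)/2}\leq 1$.

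Next I would sum the geometric tail: $\sum_{n=2}^{\infty}|X|^n=|X|^2/(1-|X|)$, valid since $|X|=1/a<1/2<1$. Substituting $|X|=1/a$ yields
$$|G|\geq \frac1a-\frac{1/a^2}{1-1/a}=\frac1a-\frac{1}{a(a-1)}=\frac{(a-1)-1}{a(a-1)}=\frac{a-2}{a(a-1)}~,$$
which is the claimed bound, and it is nonnegative precisely because $a>2$. Setting $a=3$ gives $|G|\geq 1/(3\cdot 2)=1/6$, establishing the ``in particular'' statement.

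I do not expect any genuine obstacle here; the only point requiring a little care is the reverse triangle inequality step — one must check that the bound being subtracted, namely $|X|^2/(1-|X|)$, is strictly less than the leading term $|X|$, equivalently $|X|<1-|X|$, i.e.\ $|X|<1/2$, i.e.\ $a>2$. This is exactly the hypothesis, so the argument is consistent and the estimate $|G|\geq(a-2)/(a(a-1))$ is meaningful (and positive) on the whole range claimed. No monotonicity arguments or two-variable critical-point analysis are needed for this lemma.
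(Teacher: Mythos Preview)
Your proof is correct and is essentially the same as the paper's: both isolate the leading term $1/x$, bound the tail by replacing every $q^{n(n-1)/2}$ by $1$, and sum the resulting geometric series to obtain $(a-2)/(a(a-1))$. The only cosmetic difference is that you pass to $X=1/x$ while the paper keeps the variable $x$ and factors out $1/|x|$.
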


\begin{proof}
  Indeed,

  $$\begin{array}{cclcl}
    |G|&\geq&(1/|x|)(1-\sum _{j=-\infty}^{-2}q^{j(j+1)/2}|x^{j+1}|)&\geq&
    (1/|x|)(1-\sum _{j=-\infty}^{-2}|x^{j+1}|)\\ \\
    &=&(1/a)(1-\sum _{j=-\infty}^{-2}a^{j+1})&=&(a-2)/a(a-1)~.\end{array}$$
\end{proof}

We set $X:=1/x$ and we represent the function $G$ in the form
$G=G_5+G_*$, $G_5:=X+qX^2+q^3X^3+q^6X^4+q^{10}X^5$,
$G_*:=\sum _{j=5}^{\infty}q^{j(j+1)/2}X^{j+1}$. 

\begin{lm}\label{lm147}
  For $(q,t)\in [0.6,1]\times [0,w]$, one has $|G_*|<0.0208$ and
  $|G_5|>0.147$.
\end{lm}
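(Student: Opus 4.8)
\textbf{Proof plan for Lemma~\ref{lm147}.}
The strategy is to bound $|G_*|$ and $|G_5|$ separately over the rectangle $(q,t)\in[0.6,1]\times[0,w]$, where $x=-t+wi$, so $|x|^2=t^2+w^2$ and $|X|=1/|x|$. Since $t\in[0,w]$ and $w=3/\sqrt2$, one has $|x|^2\in[w^2,2w^2]=[4.5,9]$, hence $|X|=1/|x|\in[1/3,1/\sqrt{4.5}]$; in particular $|X|\le 1/\sqrt{4.5}=0.4714\ldots$ and $|X|\ge 1/3$. For the tail, I would use the crude estimate
$$|G_*|\le\sum_{j=5}^{\infty}q^{j(j+1)/2}|X|^{j+1}\le\sum_{j=5}^{\infty}|X|^{j+1}=\frac{|X|^6}{1-|X|},$$
valid because $q\le 1$, and then plug in the maximal value $|X|=1/\sqrt{4.5}$. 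A quick check: $|X|^6\le (4.5)^{-3}=1/91.125$ and $1-|X|\ge 1-0.472=0.528$, giving $|G_*|\le 1/(91.125\cdot 0.528)=0.0208\ldots$, which is exactly the claimed bound (one should verify the third digit carefully, possibly keeping one more term of the geometric series explicitly to get strict inequality).

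For $|G_5|$, the point is that $G_5=X(1+qX+q^3X^2+q^6X^3+q^{10}X^4)$ is dominated by its first term $X$, whose modulus is $|X|\ge 1/3$. I would write $G_5=X+\rho$ with $\rho:=qX^2+q^3X^3+q^6X^4+q^{10}X^5$ and bound $|\rho|\le q|X|^2+q^3|X|^3+q^6|X|^4+q^{10}|X|^5$. The awkward feature is that this tail is not uniformly small: at $q=1$, $|X|=1/3$ it equals $1/9+1/27+1/81+1/243\approx 0.182$, which is not comfortably below $1/3-0.147=0.186$. So a bare triangle inequality $|G_5|\ge|X|-|\rho|$ is too lossy near $q=1$, $t=0$. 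To fix this I would instead keep the second term with its sign: note that $X=-1/(t-wi)$, so for $t$ near $0$, $X\approx i/w$ and $qX^2\approx -q/w^2$ is roughly real and negative while $X$ is roughly imaginary, so $X$ and $qX^2$ are nearly orthogonal and $|X+qX^2|^2\approx|X|^2+|qX^2|^2$ rather than suffering cancellation. Concretely I would write $|G_5|\ge|X+qX^2|-(q^3|X|^3+q^6|X|^4+q^{10}|X|^5)$ and lower-bound $|X+qX^2|=|X|\,|1+qX|$ by computing $|1+qX|^2=1+2q\,\mathrm{Re}\,X+q^2|X|^2$ with $\mathrm{Re}\,X=\mathrm{Re}\frac{-1}{t-wi}=\frac{-t}{t^2+w^2}\in[-1/(2w),0]$, so $\mathrm{Re}\,X\ge -1/(2w)=-1/(3\sqrt2)$ and $|1+qX|^2\ge 1-2q/(3\sqrt2)+q^2|X|^2$.

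The main obstacle is thus the lower bound on $|G_5|$ near the corner $q=1$, $t=0$ of the rectangle, where the ``obvious'' triangle-inequality estimate fails by a hair; the remedy is to exploit the near-orthogonality of $X$ and $qX^2$ (equivalently, to carry the quadratic term $1+qX$ intact) so that one gains back enough from $q^2|X|^2$ and from $|1+qX|^2$ staying bounded away from the problematic region. After that, $G_5$ being a polynomial in the two real variables $q$ and $t$ (via $X=1/(-t+wi)$), the function $|G_5|^2$ is a rational function of $(q,t)$ of modest degree, so in principle one can locate its critical points in $[0.6,1]\times[0,w]$ and check the boundary directly, as the paper's Remark licenses (``We do not give details when proving the absence of critical points of polynomials in two variables in given rectangles''); I would use this as the clean way to finish, reducing the $|G_5|>0.147$ claim to a finite computation. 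The $|G_*|<0.0208$ half needs no such care — it is immediate from the geometric-series bound and $|X|\le 1/\sqrt{4.5}$, and only the bookkeeping of the last decimal digit requires attention.
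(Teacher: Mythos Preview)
Your treatment of $|G_*|$ is correct and is exactly what the paper does: the bound $\sum_{j\ge 5}|X|^{j+1}$ with $|X|\le 1/w$ gives $1/(w^5(w-1))=0.02076\ldots<0.0208$.

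For $|G_5|$ your route differs from the paper's, and your diagnostic has a slip. At $q=1$, $|X|=1/3$ one gets $\sum_{k=2}^{5}|X|^k=40/243\approx 0.165$, not $0.182$, so there the naive bound actually clears $0.147$ comfortably; the genuine failure of $|X|-|\rho|$ is at the \emph{other} end $t=0$ (where $|X|=1/w\approx 0.4714$), giving only about $0.072$. Your refinement $|G_5|\ge |X|\,|1+qX|-q^3|X|^3-q^6|X|^4-q^{10}|X|^5$ does recover a healthy margin at both corners (I get roughly $0.34$ at $t=0$ and $0.21$ at $t=w$ when $q=1$), but you have not verified it over the whole rectangle, and you fall back on ``optimize $|G_5|^2$ and locate critical points'' without carrying this out; $|G_5|^2$ is degree~$20$ in~$q$, so this is heavier than necessary.

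The paper avoids all of this by bounding not $|G_5|$ but $|\mathrm{Im}\,G_5|$: one writes $G_I:=\mathrm{Im}\,G_5(q,1/(-t+wi))=(3\sqrt{2}/(2t^2+9)^5)\,G_I^{\flat}$ with $G_I^{\flat}$ a polynomial in $(q,t)$ of degree~$10$ in $q$, and shows $G_I<-0.147$ on $[0.6,1]\times[0,w]$ by an explicit majorization on $t\in[0,1]$ and by proving $\partial G_I^{\flat}/\partial q>0$ on $t\in[1,w]$ (so the maximum in $q$ is at $q=1$, where $G_I$ is monotone in $t$). This sidesteps the cancellation issue entirely: the imaginary part already captures the ``orthogonality'' you are trying to exploit, and the resulting polynomial is half the degree of $|G_5|^2$, which is what makes the hand verification tractable.
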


\begin{proof}
  For $(q,t)\in [0.6,1]\times [0,w]$, it is true that
  $|X|=1/|-t+wi|\leq 1/w$ and 

  $$|G_*|\leq \sum _{j=5}^{\infty}|q^{j(j+1)/2}X^{j+1}|\leq
  \sum _{j=5}^{\infty}|X^{j+1}|\leq \sum _{j=5}^{\infty}|w^{-j-1}|=
  0.02076055760\ldots <0.0208$$
  which proves the first claim of the lemma. To prove the second one we
  represent the function $G_I:=$Im$(G_5(q,1/(-t+wi)))$ in the form

  $$G_I=(3\sqrt{2}/(2t^2+9)^5)G^{\flat}_I~,~~~\, {\rm where}~~~\,
  G^{\flat}_I=g_0+g_1q+g_3q^3+g_6q^6+g_{10}q^{10}~,~~~\, {\rm with}$$

  $$\begin{array}{cclccl}
    g_0&:=&-16t^8-288t^6-1944t^4-5832t^2-6561~,&g_6&:=&64t^5-1296t~,\\ \\
    g_1&:=&32t^7+432t^5+1944t^3+2916t~,&&\\ \\
    g_3&:=&-48t^6-360t^4-324t^2+1458~,&g_{10}&:=&-80t^4+720t^2-324~.\end{array}$$
  Our first goal is to give an upper bound for $G_I$ for $t\in [0,1]$
  (hence an upper bound for $G_I^{\flat}$). We use the evident equalities and
  inequalities

  $$\begin{array}{llll}
    288=256+32~,&1944=1512+432~,&5832=3168+1944+720~,&
    6561=2916+1458+2187~,\\ \\ 
    32t^6\geq 32t^7q~,&432t^4\geq 432t^5q~,&1944t^2\geq 1944t^3q~,&
    2916\geq 2916tq~,\\ \\
    64tq^6\geq 64t^5q^6~,&720t^2\geq 720t^2q^{10}~,&1296=64+1232&1458\geq 1458q^3~
  \end{array}$$
  to obtain an upper bound $G^u$ for $G_I^{\flat}$ in which all coefficients are
  negative:

  $$\begin{array}{ccl}G^u&:=&-16t^8-256t^6-1512t^4-3168t^2-2187\\ \\
    &&-(48t^6+360t^4+324t^2)q^3-1232tq^6-(80t^4+324)q^{10}~.\end{array}$$
  For $t\in [0,1]$ fixed, the upper bound of the product
  $(3\sqrt{2}/(2t^2+9)^5)G^u$ is attained for $q=0.6$. The function
  $(3\sqrt{2}/(2t^2+9)^5)G^u|_{q=0.6}$ is decreasing in $t$ and its value
  for $t=0$ is $v_1:=-0.1572756008\ldots$, so $v_1$ is the upper bound of $G_I$
  for $t\in [0,1]$, $q\in [0.6,1]$.

  Suppose now that $t\in [1,w]$ and $q\in [0.6,1]$. We observe first that
  the function $G_I|_{q=1}$ is increasing and
  $(G_I|_{q=1})|_{t=w}=-0.1478254790\ldots =:v_2$. Next, we prove that

   $$\partial G^{\flat}_I/\partial q=g_1+3qg_3+6q^5g_6+10q^9g_{10}>0~~~\,
  {\rm hence}~~~\, \partial G_I/\partial q>0~.$$
  The quantities $g_1$, $g_6$ and $g_{10}$ do not change sign for $t\in [1,w]$:
  $g_1>0$, $g_6\leq 0$, with equality only for $t=w$, while $g_{10}>0$.
  The polynomial $g_3$ is negative for $t>t_1:=1.224744871\ldots$ and
  positive for $t\in [1,t_1)$; it vanishes for $t=t_1$. Thus for $t\geq t_1$,

    $$\partial G^{\flat}_I/\partial q\geq g_1+3g_3+6g_6+10\times 0.6^{10}g_{10}=:
    G^{\ddagger}~.$$
    The polynomial $(G^{\ddagger})'$ has a single real root
    $t_2:=1.144295977\ldots$. The function $G^{\ddagger}$ is increasing for
    $t\geq t_2$, so
    for $t\geq t_1>t_2$, one has
    $G^{\ddagger}(t)>G^{\ddagger}(t_2)=9.468005\ldots >0$.

     For $t\in [1,t_1]$, we minorize the function
    $\partial G^{\flat}_I/\partial q$ in each of the four cases
    $q\in [0.6,0.7]$, $q\in [0.7,0.8]$, $q\in [0.8,0.9]$ and $q\in [0.9,1]$.
    Denote any of these four intervals by $[a,b]$. The minoration is
    looked for in the form

    $$G_{a,b}:=g_1+3a^2g_3+6b^5g_6+10a^9g_{10}~.$$
    Each of the four functions $G_{a,b}$ turns out to be monotone increasing
    on $[1,t_1]$, so the four minima are attained for $t=1$. They equal
    $4897.5\ldots$, $4096.5\ldots$, $2777.1\ldots$ and $920.4\ldots$
    respectively. Hence for $t\in [1,w]$, $\partial G^{\flat}_I/\partial q>0$
    and the function $G_I$ is maximal for $q=1$. Hence it is
    $\leq v_2$. For $t\in [0,1]$, it is $\leq v_1$, so it is $\leq v_2<-0.147$
    for $(q,t)\in [0.6,1]\times [0,w]$ and $|G_I|>0.147$.

\end{proof}

\begin{lm}\label{lmcos}
  Consider the factors $|1+q^mx|$ and $|1+q^{m-1}/x|$, $m=1$, $2$, $\ldots$.

  (1) For $q\in (0,1)$ fixed and for $x\in C_1\cup C_2$,
  these quantities are decreasing functions in $\varphi :={\rm arg}x$.

  (2) For $x=3e^{3\pi i/4}$, each factor $|1+q^{m-1}/x|$, $m\geq 2$,
  is a decreasing function
  in $q\in (0,1)$.

  (3) For $q\in [0.5,1]$ and for $x=3e^{3\pi i/4}$, the factor $|1+qx|$ is an
  increasing function in~$q$.
\end{lm}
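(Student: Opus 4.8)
The plan is to reduce each of the three claims to the monotonicity of an explicit low-degree real function of a single variable, after which the verification is elementary (derivative computation plus numerical location of real roots, exactly as the paper allows). For a complex number $x=3e^{i\varphi}$ on the circle $|x|=3$ and for a real parameter $a>0$, I would first record the basic identity $|1+a e^{i\varphi}/3|^2 = 1+(a^2/9)+(2a/3)\cos\varphi$ and, similarly, $|1+q^m x|^2 = 1+9q^{2m}+6q^m\cos\varphi$. So every factor in question is, up to taking a square root (a monotone operation), a function of the form $A+B\cos\varphi$ with $B>0$; and since on the arc $C_1\cup C_2$ the argument $\varphi$ ranges over $[\pi/2,\pi]$, where $\cos\varphi$ is strictly decreasing, claim~(1) follows immediately: each $|1+q^m x|$ and each $|1+q^{m-1}/x|$ is a composition of the decreasing map $\varphi\mapsto\cos\varphi$ with an increasing affine map and then a square root. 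This handles part~(1) with essentially no computation.

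For parts~(2) and~(3) I fix $x=3e^{3\pi i/4}=\lambda$, so $\cos\varphi=-1/\sqrt2$, and the relevant factors become functions of $q$ alone. For part~(2), $|1+q^{m-1}/\lambda|^2 = 1 + q^{2(m-1)}/9 - (\sqrt2/3)\,q^{m-1}$; writing $u:=q^{m-1}\in(0,1]$ for $m\ge2$, this is $h(u):=1+u^2/9-(\sqrt2/3)u$, a downward-then-upward parabola with vertex at $u=3/\sqrt2>1$, hence strictly decreasing on $(0,1]$. Since $u=q^{m-1}$ is an increasing function of $q\in(0,1)$ (for $m\ge2$ the exponent $m-1\ge1$ is positive), the composition $q\mapsto|1+q^{m-1}/\lambda|$ is decreasing in $q$, which is claim~(2). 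The same substitution $u=q^{m-1}$ is the reason the hypothesis $m\ge2$ is needed: for $m=1$ the factor is the constant $|1+1/\lambda|$.

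For part~(3), $|1+q\lambda|^2 = 1 + 9q^2 - 3\sqrt2\, q =: p(q)$, again an upward parabola, now with vertex at $q=\sqrt2/6 = 0.2357\ldots$, so $p$ is strictly increasing on $[\,\sqrt2/6,\,1\,]\supset[0.5,1]$; taking the square root preserves this, giving claim~(3). The only thing to double-check is that $\sqrt2/6<0.5$, which is immediate. I do not expect any genuine obstacle here: the whole lemma is a bookkeeping exercise in reducing moduli of complex linear factors on a fixed circle to affine functions of $\cos\varphi$ or to quadratics in $q$ evaluated at a fixed argument; the mild subtlety is simply remembering that $\cos\varphi$ is negative at $\varphi=3\pi/4$ (so the relevant quadratics open upward and one must check on which side of the vertex the interval in question lies) and that part~(2) excludes $m=1$ precisely because then the "variable" $q^{m-1}$ degenerates to a constant.
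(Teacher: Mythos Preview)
Your proposal is correct and takes essentially the same approach as the paper: the paper invokes the cosine theorem to write $|1+q^mx|^2$ and $|1+q^{m-1}/x|^2$ as $A+B\cos\varphi$ with $B>0$ (giving part~(1)), then fixes $\cos\varphi=-\sqrt{2}/2$, $|x|=3$ and differentiates the resulting quadratics in $q$ to obtain parts~(2) and~(3). Your version is slightly cleaner in that you locate the vertices of the parabolas ($u=3/\sqrt{2}$ and $q=\sqrt{2}/6$) rather than computing derivatives, but the content is identical.
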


\begin{proof}
  The first claim of the lemma follows from the cosine theorem. Indeed,
  recall that $1/x=\bar{x}/|x|^2$, so arg$(1/x)=-$arg$x=-\varphi$ and
  $\cos ($arg$(1/x))=\cos \varphi$. Hence 

  $$\begin{array}{ccl}
    |q^mx-(-1)|^2&=&q^{2m}|x|^2+(-1)^2-2(-1)q^m|x|\cos \varphi\\ \\ &=&
    q^{2m}|x|^2+1+2q^m|x|\cos \varphi~,\\ \\
    |(q^{m-1}/x)-(-1)|^2&=&q^{2m-2}/|x|^2+(-1)^2-2(-1)(q^{m-1}/|x|)
    \cos \varphi\\ \\ &=&
    q^{2m-2}/|x|^2+1+2(q^{m-1}/|x|)\cos \varphi~.
  \end{array}$$
  For $q$ fixed, these quantities are decreasing in $\varphi$, because
  such is $\cos \varphi$. Set $\cos \varphi :=-\sqrt{2}/2$, $|x|:=3$.
  The displayed formulas show that

  $$\begin{array}{llll}
    d(|1+q^{m-1}/x|)/dq&=&((m-1)q^{m-2}/|x|^2)(2q^{m-1}-\sqrt{2}|x|)<0&
    {\rm and}\\ \\
    d(|1+qx|)/dq&=&m|x|(2q|x|-\sqrt{2})>0&\end{array}$$
  from which one deduces the last two claims of the lemma.
  \end{proof}

In the proofs we need some properties of the functions $M:=|(1+qx)(1+q/x)|$
and $M_0:=(1-q)M$. We remind that we set $x=-t+wi$, $t\in [0,w]$, $w=3/\sqrt{2}$.

\begin{lm}\label{lmmaxmodulus}
      For $t\in [0,w]$ and for $q\in [0.6,1]$ fixed, the quantities $M$ and $M_0$ 
      are maximal
      for $t=0$.  For $q\in [0.6,0.75]$ fixed and for $t\in [1,w]$, they are maximal for $t=1$.
    \end{lm}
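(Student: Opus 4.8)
Write $x=-t+wi$ with $t\in[0,w]$, so that
$$M^2=|1+qx|^2\,|1+q/x|^2=\bigl((1-qt)^2+q^2w^2\bigr)\cdot\frac{(t^2+w^2-qt)^2+q^2w^2}{(t^2+w^2)^2}~,$$
since $1/x=\bar x/|x|^2=(-t-wi)/(t^2+w^2)$, hence $1+q/x=\bigl(t^2+w^2-qt-qwi\bigr)/(t^2+w^2)$. Thus $M^2$ is a rational function of $t$ (with $t^2$ appearing, plus the linear term $qt$), and $M_0^2=(1-q)^2M^2$ differs only by the constant factor $(1-q)^2$, so the two extremal problems are identical: it suffices to locate the maximum of $M^2$ in $t$ for each fixed $q$ in the relevant range. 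The plan is to show that $\partial (M^2)/\partial t<0$ throughout the interval in question, so the maximum is at the left endpoint ($t=0$ for $q\in[0.6,1]$, and $t=1$ for $q\in[0.6,0.75]$ on the subinterval $[1,w]$).

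\textbf{Key steps.} First I would clear denominators: set $P(q,t):=(t^2+w^2)^2\,M^2$, a polynomial in $t$ (of degree $8$) and $q$, and observe that the sign of $\partial(M^2)/\partial t$ equals the sign of
$$N(q,t):=\frac{\partial P}{\partial t}(t^2+w^2)^2-P\cdot\frac{\partial}{\partial t}(t^2+w^2)^2
= \frac{\partial P}{\partial t}(t^2+w^2)-4t\,P~,$$
up to the positive factor $(t^2+w^2)$, so it is enough to prove $N(q,t)<0$ on $[0.6,1]\times[0,w]$ and on $[0.6,0.75]\times[1,w]$. Second, since $N$ is a polynomial of modest degree in two variables, I would follow the strategy announced in the paper's opening remark: show $N<0$ on the boundary of each rectangle (using that each boundary edge gives a one-variable polynomial, monotone or with explicitly computable real roots) and verify that $N$ has no critical point inside the rectangle at which $N\geq 0$ — indeed one checks the system $\partial N/\partial t=\partial N/\partial q=0$ has no solution in the rectangle, which MAPLE handles easily given the low degree. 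Third, I would record the endpoint values: at $t=0$ one has $M^2=(1+q^2w^2)^2/w^2$ — wait, more precisely $M^2|_{t=0}=(q^2w^2+1)^2/w^2$ after simplification, but the precise value is irrelevant; what matters is only that $t=0$ (resp.\ $t=1$) realizes the maximum.

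\textbf{Main obstacle.} The delicate point is the second range: on $[0.6,0.75]\times[1,w]$ the claim is that the maximum on $[1,w]$ is at $t=1$, whereas on the full interval $[0,w]$ (first range) it is at $t=0$; this means $M^2$ as a function of $t$ is not globally monotone — it must be decreasing on $[0,w]$ for $q\ge 0.6$ but the restriction to $[1,w]$ still has its max at the left end, which is consistent with global decrease, so in fact the first statement \emph{implies} the second \emph{for} $q\in[0.6,0.75]$. Hence the real content is entirely the inequality $N(q,t)<0$ on $[0.6,1]\times[0,w]$, and the only genuine risk is that $N$ changes sign near $t=0$ for $q$ close to $1$ (where $1-qt$ is near $1$ and the competition between the two factors is tightest); I would therefore treat a small neighbourhood $t\in[0,\varepsilon]$ separately if needed, bounding $\partial(M^2)/\partial t$ there by hand, and handle $t\in[\varepsilon,w]$ by the boundary-plus-no-interior-critical-point argument. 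If $N<0$ holds cleanly on the whole rectangle, no splitting is needed and the lemma follows at once, the $M_0$ case being immediate from $M_0^2=(1-q)^2M^2$.
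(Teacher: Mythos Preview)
Your plan has a genuine gap: the monotonicity claim $\partial(M^2)/\partial t<0$ on $[0.6,1]\times[0,w]$ is \emph{false}. Take for instance $q=0.75$. Using (your correctly derived formula, after the simplification $|1+q/x|^2=((q-t)^2+w^2)/(t^2+w^2)$)
\[
M^2=\frac{\bigl((1-qt)^2+q^2w^2\bigr)\bigl((q-t)^2+w^2\bigr)}{t^2+w^2}~,
\]
one computes $M^2(0.75,1)\approx 4.30$, $M^2(0.75,1.5)\approx 3.82$, $M^2(0.75,w)\approx 4.08$: the function dips and then rises again on $[1,w]$. For $q=1$ the effect is even clearer (the sign of $\partial(M^2)/\partial t$ is that of $2t^3+7t-18$, which changes sign near $t\approx 1.55$). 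So your polynomial $N(q,t)$ is positive on a substantial portion of the rectangle, not just in a thin strip near $t=0$; the ``risk'' you anticipated is real and occurs in the interior, not at the edge you guessed. Consequently your observation that the first claim (via monotonicity) implies the second collapses: since monotonicity fails, the two claims really are independent, and the first claim \emph{as stated} (maximum at $t=0$) does not by itself force the maximum on $[1,w]$ to sit at $t=1$.

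The paper avoids this trap by never asserting monotonicity. Instead it computes the differences directly:
\[
M^2-M^2\big|_{t=0}=-\frac{2qt\,P(q,t)}{2t^2+9}~,\qquad
M^2-M^2\big|_{t=1}=-\frac{2q(t-1)\bigl(V_2q^2+V_1q+V_0\bigr)}{11(2t^2+9)}~,
\]
and shows each numerator factor $P$, respectively $V_2q^2+V_1q+V_0$, is positive on the relevant rectangle (the first by splitting into quadratic forms with negative discriminant, the second by the boundary-plus-no-interior-critical-point routine you yourself invoked). This sidesteps the sign change of $\partial(M^2)/\partial t$ entirely. To repair your argument you would have to abandon the derivative route and adopt a comparison-with-endpoint approach of this kind.
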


\begin{proof}
      It suffices to prove the claims of the lemma about the function $M$. One checks directly for the square of $M$ that

      $$M^2=(2q^2t^2+9q^2-4qt+2)(2q^2-4qt+2t^2+9)/(2t^2+9)~.$$
      One verifies straightforwardly that 
      
      $$\begin{array}{l}
         M^2-M^2|_{t=0}=-2qtP/(2t^2+9)~,~~~\, {\rm where}\\ \\ 
         P:=36q^2t^2-18qt^3+ 198q^2-149qt+36t^2+ 198~.
        \end{array}$$
The discriminant of the trinomial  $36(qt)^2-149qt+198$ is negative, so this trinomial is positive-valued. For the remaining terms of $P$, for $t\in [0,w]$ (hence $t^2\leq 9/2$), one obtains

$$-18qt^3+198q^2+36t^2\geq -81qt+198q^2+36t^2$$
which is again a trinomial with negative discriminant. Thus $P>0$ and $M^2-M^2|_{t=0}\leq 0$ with equality only for $t=0$ which proves the first claim of the lemma. To prove its second statement we consider the difference 

$$\begin{array}{l}M^2-M^2|_{t=1}=-2q(t-1)(V_2q^2+V_1q+V_0)/11(2t^2+9)~,~~~\, {\rm where}\\ \\ 
   V_2=V_0:=44t^2-8t+234~~~\, {\rm and}~~~\, V_1:=-(t+1)(22t^2+167)~.
  \end{array}
$$
The polynomial $V_2q^2+V_1q+V_0$ has no crfitical points for $(q,t)\in [0.6,0.75]\times [1,w]$. Its restrictions to each of the sides of this rectangle (i.~e. its restrictions obtained for $q=0.6$, $q=0.75$, $t=1$ and $t=w$) are positive-valued. Hence the difference $M^2-M^2|_{t=1}$ is negative in the given rectangle which proves the second claim of the lemma.

\end{proof}

 \begin{rem}\label{remM}
{\rm For $x=-t+wi$, we represent in Fig.~\ref{twographs} the graph of the function}

$$\begin{array}{ccl}
   M_1(q,t)&:=&M_0(q,-t+wi):=(1-q)|(1+qx)(1+q/x)|\\ \\
   &=&(1-q)(2q^2t^2+9q^2-4tq+2)^{1/2}
  (2q^2-4tq+2t^2+9)^{1/2}/(2(2t^2+9))^{1/2}\end{array}$$
 {\rm for two fixed values of $t$, namely
    $t=0$ (in solid line) and
   $t=1$ (in dashed line). The two functions}

\begin{figure}[htbp]
\centerline{\hbox{\includegraphics[scale=0.7]{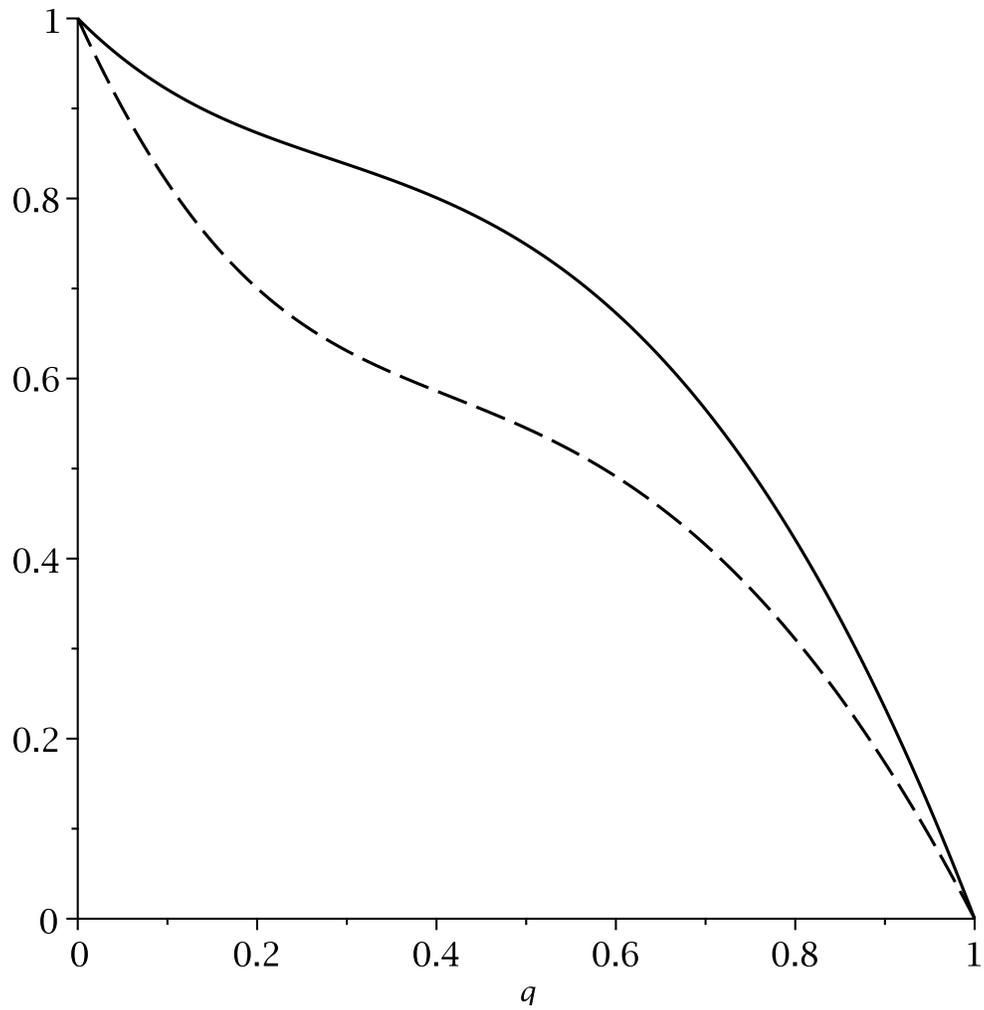}}}
\caption{The graphs of the functions $M_1(q,0)$ (in solid line) and
  $M_1(q,1)$ (in dashed line).}
\label{twographs}
          \end{figure}

$$\begin{array}{l}
M_1(q,0)=(1-q)(9q^2+2)^{1/2}
  (2q^2+9)^{1/2}/3\sqrt{2}~~~\, {\rm and}\\ \\ M_1(q,1)=(1-q)(11q^2-4q+2)^{1/2}
  (2q^2-4q+11)^{1/2}/\sqrt{22}\end{array}$$
  {\rm are decreasing on $[0,1]$.}



\end{rem}

\section{Plan of the proof of
  Theorem~\protect\ref{tmmain}\protect\label{secplan}}

The zeros of $\theta$ depend continuously on $q$ and no zeros are born at
$\infty$. We prove that for $q\in (0,1)$, there is no zero of $\theta$ on the
border $\partial \mathcal{D}$ of the domain $\mathcal{D}$. For $q\in (0,0.5]$,
          this follows from the proposition below. We remind that (see Notation~\ref{notat})

$$\partial \mathcal{D}=C_2\cup C_3\cup S_+\cup S_-\cup S_v~.$$

          \begin{prop}\label{prop05}
  For $q\in (0,0.5]$, the function $\theta (q,.)$ has no zeros in the
  closed rectangle
  $\Delta :=\{ -3\leq {\rm Re}x\leq 0,~-3\leq {\rm Im}x\leq 3\}$.
\end{prop}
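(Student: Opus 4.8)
The plan is to bound $|\theta(q,x)|$ away from $0$ on \emph{all} of the closed rectangle $\Delta$, for every $q\in(0,0.5]$, by splitting $\Delta$ according to $|x|$ and reserving a truncation estimate for the delicate corner region. Since the coefficients $q^{j(j+1)/2}$ are real, one has $\theta(q,\bar x)=\overline{\theta(q,x)}$, so the zeros occur in complex conjugate pairs; as $\Delta$ is symmetric about the real axis it suffices to treat $\mathrm{Im}\,x\geq 0$. The largest modulus attained on $\Delta$ is $|-3+3i|=3\sqrt{2}$, reached at the two upper corners.

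First I would dispose of the region of small modulus. For $|x|\leq 1$ and $q\in(0,0.5]$,
\[
|\theta(q,x)-1|\leq \sum_{j=1}^{\infty}q^{j(j+1)/2}|x|^{j}\leq \sum_{j=1}^{\infty}(0.5)^{j(j+1)/2}<0.65<1,
\]
so $\theta\neq 0$ there; the same conclusion also follows at once from Proposition~7 in~\cite{Ko1}, which gives $\theta\neq 0$ for $|x|\leq 1/(2q)\geq 1$. The same elementary estimate, now with $|x|\leq 3\sqrt{2}$, yields $\sum_{j\geq 1}q^{j(j+1)/2}|x|^{j}<1$ already for all $q\leq 0.2$, so the entire rectangle is settled for such small $q$ with no further work.

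There remains the annular part $1<|x|\leq 3\sqrt{2}$, $\mathrm{Re}\,x\leq 0$, with $q\in[0.2,0.5]$, and here I would use the truncation $\theta=\theta_k^{\bullet}+\theta_*^{\bullet}$ of Notation~\ref{notaxXt}. The tail is controlled by $|\theta_*^{\bullet}|\leq \sum_{j>k}q^{j(j+1)/2}|x|^{j}$, a quantity increasing in both $q$ and $|x|$ and hence maximal at $q=0.5$, $|x|=3\sqrt{2}$; taking $k=4$ (or $k=5$) this gives a small explicit bound, below $0.045$ for $k=4$. It then suffices to establish the matching lower bound $|\theta_k^{\bullet}(q,x)|>\sum_{j>k}(0.5)^{j(j+1)/2}(3\sqrt{2})^{j}$ on the region, a statement about an explicit polynomial in $x$ whose coefficients are monomials in $q$.

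The main obstacle is precisely this polynomial lower bound near the corners $-3\pm 3i$, where $|x|$ and the top-degree terms are largest and $|\theta_k^{\bullet}|$ is smallest: a direct evaluation at $x=-3+3i$, $q=0.5$ gives $|\theta_4^{\bullet}|\approx 0.098$, comfortably above the tail bound $0.045$ but with a margin thin enough that the estimate must be made uniform rather than pointwise. I would therefore subdivide both the region (into the sub-disk already handled, a middle annulus, and small neighbourhoods of the two corners) and the interval $[0.2,0.5]$ into a few pieces, and on each piece minorise $|\theta_k^{\bullet}|^{2}=(\mathrm{Re}\,\theta_k^{\bullet})^{2}+(\mathrm{Im}\,\theta_k^{\bullet})^{2}$ by the monotonicity-and-critical-point method the paper uses elsewhere, computing the few real critical points of the relevant polynomials and checking the sides of each rectangle, exactly as in the proof of Lemma~\ref{lmmaxmodulus}. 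Raising $k$ by one shrinks the tail by roughly an order of magnitude and widens the margin, so the subdivision can be kept coarse; this is where the bulk of the routine but careful computation lies.
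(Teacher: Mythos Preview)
Your plan is sound in principle and shares the paper's truncation $\theta=\theta_4^{\bullet}+\theta_*^{\bullet}$ with the same tail bound $|\theta_*^{\bullet}|<0.046$, but it misses the key simplification the paper uses. You propose to prove $|\theta_4^{\bullet}(q,x)|>0.046$ directly on the whole two-dimensional annular region $1<|x|\leq 3\sqrt{2}$, $\mathrm{Re}\,x\leq 0$, for $q\in[0.2,0.5]$, by subdividing and analysing critical points of $|\theta_4^{\bullet}|^2$ --- a polynomial of degree~$8$ in $(\mathrm{Re}\,x,\mathrm{Im}\,x)$ with coefficients of degree~$20$ in~$q$. The paper instead applies Rouch\'e's theorem: it checks $|\theta_4^{\bullet}|>|\theta_*^{\bullet}|$ only on the \emph{boundary} $\partial\Delta$ (three line segments, after conjugate symmetry), then observes that for $q\leq 0.01$ the polynomial $\theta_4^{\bullet}$ trivially has no zeros in $\Delta$, and that the boundary inequality prevents any zero of $\theta_4^{\bullet}$ from entering $\Delta$ as $q$ increases to $0.5$; Rouch\'e then gives that $\theta$ and $\theta_4^{\bullet}$ have the same number of zeros in $\Delta$, namely none. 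This reduces a three-parameter verification to a two-parameter one, and on each side of $\partial\Delta$ the paper further simplifies by bounding $\max(|\mathrm{Re}\,\theta_4^{\bullet}|,|\mathrm{Im}\,\theta_4^{\bullet}|)$ rather than the full modulus, checking that $G_R$ and $G_I$ (and their analogues on the other sides) have no interior critical points in the relevant $(q,t)$-rectangle and then examining the four edges. Your route would work --- a~posteriori the minimum modulus principle guarantees the interior minimum of $|\theta_4^{\bullet}|$ equals the boundary minimum --- but the computation you defer is substantially heavier than what the paper actually carries out.
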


The proof of this proposition and of all lemmas formulated in this
section are given in Section~\ref{secproofs}. The rectangle
$\Delta$ contains the domain $\mathcal{D}$, so for $q\in (0,0.5]$, there
are no zeros of $\theta$ on $\partial \mathcal{D}$. One can observe that
for $q\in (0,\tilde{q}_1]$, $\tilde{q}_1=0.3092\ldots$,
          there are no complex conjugate pairs of $\theta$ (see Section~\ref{secremind}),
and for $q\in (\tilde{q}_1,0.5]$, there is exactly one such pair.  
            
  From now on we assume that $q\in [0.5,1)$. The next lemma explains why
    no zeros of $\theta$ can be found on the arc~$C_2$ hence none on the arc
    $C_3$ either.

    \begin{lm}\label{lm051}
  For $q\in [0.5,1)$ and $x\in C_2$, one has $|G|>|\Theta ^*|$
    hence $|\theta |>0$.
  \end{lm}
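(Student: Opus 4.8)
\textbf{Plan of the proof of Lemma~\ref{lm051}.}
The plan is to use the splitting $\theta =\Theta ^*-G$ from (\ref{eqthetaG}) together with the triple product (\ref{eqtriple}), and to show that on the arc $C_2$ (where $|x|=3$ and $\arg x\in [\pi/2,3\pi/4]$) the factor coming from $\Theta^*$ is so small that it cannot compensate the lower bound $|G|\geq 1/6$ supplied by Lemma~\ref{lmG}. Concretely, from (\ref{eqtriple}) one has $\Theta^*(q,x)=(1+1/x)\prod_{m\geq 1}(1-q^m)(1+q^mx)(1+q^m/x)$, so it suffices to bound $|1+1/x|\leq 1+1/3=4/3$ and to make the remaining infinite product small. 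The main quantitative input will be Lemma~\ref{lmcos}(1): for fixed $q$ and $x$ on $C_1\cup C_2$ the factors $|1+q^mx|$ and $|1+q^{m-1}/x|$ are decreasing in $\varphi=\arg x$, so on $C_2$ (where $\varphi\geq \pi/2$) each such factor is at most its value at $\varphi=\pi/2$, i.e. at $x=3i$. At $x=3i$ one computes $|1+q^mx|^2=1+9q^{2m}$ and $|1+q^m/x|^2=1+q^{2m}/9$, which are explicit and easy to control.

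The key steps, in order, are: (i) reduce to $x=3i$ using Lemma~\ref{lmcos}(1), getting $|\Theta^*(q,x)|\leq \tfrac{4}{3}\prod_{m\geq 1}(1-q^m)\sqrt{1+9q^{2m}}\sqrt{1+q^{2m}/9}$ for $x\in C_2$; (ii) observe that for $q\in[0.5,1)$ the hardest case is $q\to 1^-$, because $(1-q^m)$ shrinks while the other factors stay bounded, and more precisely each single ``block'' $(1-q^m)\sqrt{(1+9q^{2m})(1+q^{2m}/9)}$ is bounded (its $q=1$ value is $0$, and for $q$ bounded away from $1$ one uses a finite number of blocks times the tail $\prod_{m>N}(1-q^m)(1+\cdots)<1$); (iii) cover $[0.5,1)$ by finitely many subintervals $[a,b]$ and on each one bound the product from above by a numerical constant — using that $\prod_m(1-q^m)$ is increasing in $q$ reversed, i.e. maximised at the left endpoint, and that $\prod_m\sqrt{(1+9q^{2m})(1+q^{2m}/9)}$ is increasing in $q$ hence maximised at the right endpoint, so one pairs the two monotone estimates exactly as in the proof of Proposition~\ref{propmain}; (iv) check that the resulting constant is $<1/6\leq |G|$, whence $|\theta|=|\Theta^*-G|\geq |G|-|\Theta^*|>0$.

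Alternatively, and perhaps more cheaply, one can avoid the full triple-product bookkeeping for $q$ close to $1$ by noting that the whole product $\prod_{m\geq 1}(1-q^m)$ already tends to $0$ very fast: for instance $\prod_{m\geq 1}(1-0.5^m)>0.288$ is the largest value on $[0.5,1)$ reversed, so one only needs a crude upper bound on $\prod_{m\geq 1}\sqrt{(1+9q^{2m})(1+q^{2m}/9)}$. Since $1+9q^{2m}\leq 10$ only for $m=1$ and decays geometrically, $\log\prod_m(1+9q^{2m})\leq \sum_m 9q^{2m}=9q^2/(1-q^2)$, which for $q$ not too close to $1$ is moderate; combined with $\prod_m(1-q^m)$ this gives the bound on the relevant subinterval, and for $q$ genuinely close to $1$ one instead invokes the Remark at the start of Section~\ref{secremind} (the series of $G$ converges uniformly for $|x|\geq c>1$ and $|\Theta^*|\to 0$), or one simply uses that $\prod_m(1-q^m)$ beats any fixed geometric factor.

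The main obstacle I expect is the transition region where $q$ is close to $1$: there $|G|$ is close to its value at $q=1$ (a convergent Leibniz-type series, so $|G|\geq 1/6$ still holds by Lemma~\ref{lmG}, which is uniform in $q\in[0,1]$ — good), but one must be careful that the finitely-many-blocks-plus-tail estimate for $|\Theta^*|$ stays genuinely below $1/6$ and does not merely tend to $0$ non-uniformly. The clean way around this is exactly the finite-subinterval covering of $[0.5,1)$ with the two opposite monotonicities (Lemma~\ref{lmmaxmodulus} and Remark~\ref{remM} are the analogous devices used elsewhere in the paper), so that on each closed subinterval $[a,b]\subset[0.5,1)$ one gets a genuine numerical upper bound, and one checks that even the subinterval reaching up to, say, $q=0.999$ still yields $|\Theta^*|<1/6$; the factor $\prod_{m}(1-q^m)$ makes this easy because it is already of order $10^{-3}$ or smaller well before $q=0.9$. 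Everything else is elementary (cosine theorem, monotonicity of one-variable functions, and summation of geometric series), exactly in the spirit of Lemmas~\ref{lmG}--\ref{lmcos}.
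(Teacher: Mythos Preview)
There is a genuine gap stemming from a misreading of the arc $C_2$. By Notation~\ref{notat}, $C_k$ has $\arg x\in[\pi/2+(k-1)\pi/4,\ \pi/2+k\pi/4]$, so $C_2=\{\,|x|=3,\ \arg x\in[3\pi/4,\pi]\,\}$, whereas you take $C_2$ to be the arc with $\arg x\in[\pi/2,3\pi/4]$ (that arc is $C_1$). Lemma~\ref{lmcos}(1) says the factors $|1+q^m x|$, $|1+q^{m-1}/x|$ are \emph{decreasing} in $\varphi$, so on the true $C_2$ the supremum of each factor is attained at the \emph{left} endpoint $\varphi=3\pi/4$, i.e.\ at $x=\lambda:=3e^{3\pi i/4}$, not at $x=3i$.

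This is not a cosmetic slip: your reduction to $x=3i$ produces a bound that is simply false. At $x=3i$ one has $|1+q^m x|^2=1+9q^{2m}>1$ for every $m$, so $\prod_{m\ge1}|1+q^m x|>1$ and the only damping comes from $\prod_m(1-q^m)$. For $q=0.5$ this gives
\[
|\Theta^*(0.5,3i)|\;=\;\frac{\sqrt{10}}{3}\prod_{m\ge1}(1-0.5^m)\sqrt{(1+9\cdot 0.25^m)(1+0.25^m/9)}\;\approx\;0.76\;>\;\tfrac16,
\]
so neither your stated bound $|\Theta^*|<1/6$ nor even the actual inequality $|G|>|\Theta^*|$ holds at $x=3i$ (there $|G(0.5,3i)|\approx 0.33$). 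No amount of subinterval covering repairs this, because already the single point $(q,x)=(0.5,3i)$ violates the target inequality; the monotonicity-pairing trick can at best reproduce this value on $[0.5,b]$ for any $b$. (This is consistent with observation~(1) in Section~\ref{seccomments}: on $C_1$ the inequality is expected to fail.)

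The correct reduction is to $x=\lambda$, where $\cos\varphi=-\sqrt{2}/2$ and hence $|1+q^m\lambda|^2=1+9q^{2m}-3\sqrt{2}\,q^m$, which is $<1$ for all but finitely many $m$. This sign change is what makes $|\Theta^*|$ small, and it is exactly what your choice $x=3i$ loses. Even at $\lambda$, however, the factors $|1+q^m\lambda|$ are not monotone in $q$, so your ``two opposite monotonicities'' scheme does not carry over verbatim; the paper instead pairs $(1-q^m)$ with $|1+q^m\lambda|$ into blocks $|p_m|$ bounded by $0.6$, counts how many factors $|u_m|=|1+q^m\lambda|$ exceed $1$ (the index $m_0$), and treats the cases $m_0\ge 3$, $m_0=2$, $m_0=1$ separately. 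The essential missing idea in your sketch is this block estimate $|(1-q^m)(1+q^m\lambda)|\le\tfrac14\,|1+\lambda|<0.6$, which replaces the crude product of separate suprema.
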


    The next lemma states that the function $\theta$ has no zeros on the
    segment $S_v$:

\begin{lm}\label{lmnoimaginary}
  For $q\in (0,1)$, the function $\theta (q,.)$ has no purely imaginary zeros
  of modulus $\leq 2.2$ hence no such zeros of modulus
  $\leq 3/\sqrt{2}=2.1\ldots$.
\end{lm}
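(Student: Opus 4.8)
The plan is to show that for a purely imaginary point $x = iy$ with $0 < y \le 2.2$, the imaginary part of $\theta(q, iy)$ cannot vanish, which already forces $\theta(q, iy) \ne 0$. Writing $x = iy$ we have $x^j = i^j y^j$, so the terms with $j \equiv 1 \pmod 4$ and $j \equiv 3 \pmod 4$ contribute to the imaginary part with alternating signs. Concretely,
$$\mathrm{Im}\,\theta(q, iy) = \sum_{k=0}^{\infty} (-1)^k q^{(2k+1)(2k+2)/2}\, y^{2k+1} = y\Big(q - q^6 y^2 + q^{15} y^4 - q^{28} y^6 + \cdots\Big).$$
So it suffices to prove that the series $S(q,y) := q - q^6 y^2 + q^{15} y^4 - \cdots = \sum_{k\ge 0}(-1)^k q^{(2k+1)(k+1)}y^{2k}$ is strictly positive for $q \in (0,1)$ and $y \in (0, 2.2]$.

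The natural approach is a Leibniz-type (alternating-series) argument. The terms in absolute value are $a_k := q^{(2k+1)(k+1)} y^{2k}$, and the ratio of consecutive terms is $a_{k+1}/a_k = q^{(2k+3)(k+2) - (2k+1)(k+1)} y^2 = q^{4k+5} y^2$. For $k \ge 1$ this ratio is at most $q^9 y^2$, and with $y \le 2.2$ we get $y^2 \le 4.84$, so the ratio is less than $1$ as soon as $q^9 < 1/4.84$, i.e. roughly $q < 0.836$. For such $q$ the tail from $k=1$ onward is itself an alternating series with decreasing terms, hence $S(q,y) \ge q - q^6 y^2 \ge q(1 - q^5 \cdot 4.84) > 0$ provided $q^5 < 1/4.84$, i.e. $q \lesssim 0.72$; and for the range between roughly $0.72$ and $0.836$ one groups $q - q^6 y^2 + q^{15} y^4 \ge q(1 - q^5 y^2 + q^{14} y^4)$ and checks the quadratic in $q^5 y^2$ (or rather in $z := q y^{2/7}$-type substitution) is positive, while the remaining tail $-q^{28}y^6 + \cdots$ is again a negative-leading alternating series of small magnitude that does not overturn the positive lower bound. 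For $q$ close to $1$ the ratio $q^9 y^2$ can exceed $1$, but only finitely many initial terms can be "large"; the cleanest fix is: since $q^{4k+5}y^2 \to 0$ as $k \to \infty$ uniformly on $q \le 1$, there is an index $k_0$ (explicitly computable, small) beyond which the tail is alternating-decreasing and bounded by its first term, and one bounds $S$ below by the finite sum $\sum_{k=0}^{k_0}(-1)^k q^{(2k+1)(k+1)}y^2 y^{2k}$ minus a small tail error; the finite sum is a polynomial in $q$ (with $y$ as parameter $\le 2.2$) whose positivity on $(0,1]$ one verifies by the elementary derivative-and-numerical-root method the paper permits.

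An alternative, possibly slicker route avoiding case analysis: fix $y$ and regard $S(q,y)$ as a power series in $q$ with coefficients $(-1)^k y^{2k}$ at the exponent $(2k+1)(k+1)$; since these exponents are strictly increasing and the first one is $1$, for $q$ small $S \sim q > 0$, and one checks $S$ has no zero in $(0,1]$ by showing $\partial S/\partial q > 0$ wherever $S$ is small, or by the substitution making it a genuine alternating series. Either way the mechanism is the same: dominate the tail, isolate the first one or two terms.

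The main obstacle I anticipate is the intermediate range of $q$, say $q \in [0.72, 0.9]$ with $y$ near the upper bound $2.2$, where neither "first term dominates" nor "$q$ so large the tail is negligible" applies cleanly, so one must genuinely group three or four terms and verify a low-degree polynomial inequality in two variables on an explicit rectangle. This is exactly the kind of "absence of critical points in a rectangle, checked with MAPLE" computation the paper's Remark authorises, so it is not a conceptual difficulty, merely a bookkeeping one; the bound $2.2$ (rather than the needed $3/\sqrt 2 \approx 2.121$) is presumably chosen to leave a comfortable margin so that these polynomial inequalities hold with room to spare.
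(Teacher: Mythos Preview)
Your approach targets the right quantity---the imaginary part of $\theta(q,iy)$---but you missed the structural identity that turns the paper's proof into two lines. The series you call $S(q,y)$ is precisely $q\,\theta(q^4,-qy^2)$: substituting $j=2k+1$ in the defining series one gets $\mathrm{Im}\,\theta(q,iy)=qy\,\theta(q^4,-qy^2)$ (and likewise $\mathrm{Re}\,\theta(q,iy)=\theta(q^4,-y^2/q)$). Hence if $iy_0$ were a zero, $-qy_0^2$ would be a real zero of $\theta(q^4,\cdot)$; Proposition~\ref{propmain} then forces $-qy_0^2<-5$, i.e.\ $y_0^2>5/q>5$, so $|y_0|>\sqrt5>2.2$. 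That is the entire proof.

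Your direct alternating-series plan is not merely longer; it has a genuine gap near $q=1$. The assertion that ``$q^{4k+5}y^2\to 0$ as $k\to\infty$ uniformly on $q\le 1$'' is false: at $q=1$ the ratio equals $y^2$ for every $k$, and on $(0,1)$ the convergence is not uniform. Concretely, for $y=2.2$ the index $k_0$ beyond which the terms decrease satisfies $q^{4k_0+5}<1/4.84$, so $k_0\sim c/(1-q)$ as $q\to1^-$; there is no fixed ``small'' $k_0$ and hence no single polynomial in $(q,y)$ to hand to a numerical check. This is exactly the $q\to1^-$ difficulty the paper flags in Section~\ref{seccomments} and resolves, once and for all, in the proof of Proposition~\ref{propmain} via $\theta=\Theta^*-G$ and the Jacobi triple product. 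By reducing to Proposition~\ref{propmain}, the paper's proof of the lemma inherits that machinery for free; your plan would require you to reinvent it.
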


It remains to show that there are no zeros of $\theta$ on the segments
$S_{\pm}$. It suffices to deal with the segment $S_+$. We consider the
restrictions to $[0.3,0.6]\times [0,w]$
of the functions $\theta _5(q,t):=\sum _{j=0}^5q^{j(j+1)/2}(-t+wi)^j$ and
$\theta _*(q,t)=\sum _{j=6}^{\infty}q^{j(j+1)/2}(-t+wi)^j$.

\begin{lm}\label{lm0306}
  For $(q,t)\in [0.3,0.6]\times [0,w]$, one has $|\theta _*(q,t)|\leq 0.018$
  and $\theta _I:=${\rm Im}$(\theta _5(q,t))>0.13$. Hence for
  $(q,t)\in [0.3,0.6]\times [0,w]$, the
  function $\theta$ has no zeros.
\end{lm}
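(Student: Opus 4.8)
The plan is to split the error term $\theta_*$ from the finite sum $\theta_5$ and estimate each separately on the rectangle $[0.3,0.6]\times[0,w]$. For the first claim, since $x=-t+wi$ with $t\in[0,w]$, one has $|x|^2=t^2+w^2\geq w^2=9/2$, so $|x|\geq w$; combined with $q\leq 0.6$ this gives
\[
|\theta_*(q,t)|\leq \sum_{j=6}^{\infty}q^{j(j+1)/2}|x|^j\leq \sum_{j=6}^{\infty}(0.6)^{j(j+1)/2}w^j~.
\]
Wait --- here one must be careful: $|x|^j$ grows, so one cannot bound $|x|$ from below; instead note that on the segment $S_+$ one has $|x|\le w\sqrt2$ (the farthest point is the corner $-w+wi$, of modulus $w\sqrt2=3$). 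So $|\theta_*|\leq\sum_{j=6}^\infty(0.6)^{j(j+1)/2}3^j$, and since $j(j+1)/2$ grows quadratically while $3^j$ grows only exponentially, this geometric-type tail is dominated by its first term $(0.6)^{21}\cdot 3^6$ times a convergent factor; a direct numerical summation gives a value below $0.018$. This is routine once the correct modulus bound $|x|\le 3$ on $S_+$ is used.

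For the second claim the idea is to write out $\theta_I(q,t):=\mathrm{Im}\,\theta_5(q,t)$ explicitly. Expanding $(-t+wi)^j$ by the binomial theorem and collecting imaginary parts, $\theta_I$ is a polynomial in $q$ (of degree $15$, with only the exponents $0,1,3,6,10,15$ appearing) whose coefficients are polynomials in $t$. One then wants to show $\theta_I>0.13$ on the whole rectangle. Following the pattern used in the proof of Lemma~\ref{lm147}, I would look for a lower bound: replace each monomial $c_{i}t^{a}q^{i}$ with a negative coefficient by a smaller quantity (using $q\le 0.6$ to push powers of $q$ up or down as needed, and $t^2\le 9/2$ to trade even powers of $t$), arriving at an explicit expression that is manifestly monotone in each variable on the rectangle, then evaluate at the worst corner. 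Alternatively, and perhaps more robustly, one can check directly that the polynomial $\theta_I(q,t)$ has no critical points in the interior of $[0.3,0.6]\times[0,w]$ (computing $\partial_q\theta_I$ and $\partial_t\theta_I$ and verifying their common zero set avoids the rectangle --- a MAPLE computation of the kind the Remark after Notation~\ref{notaxXt} says will not be detailed), so the minimum of $\theta_I$ is attained on the boundary; then one is reduced to four one-variable polynomial minimizations, each handled by computing derivatives and locating real roots numerically. The boundary minimum turns out to exceed $0.13$.

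Finally, combining the two estimates: on $S_+$ one has
\[
|\mathrm{Im}\,\theta(q,t)|\geq |\mathrm{Im}\,\theta_5(q,t)|-|\theta_*(q,t)|> 0.13-0.018>0~,
\]
so $\theta(q,-t+wi)\neq 0$ for $(q,t)\in[0.3,0.6]\times[0,w]$; by symmetry (conjugation) the same holds on $S_-$. Note the interval $[0.3,0.6]$ for $q$ suffices here because Proposition~\ref{prop05} already covers $q\in(0,0.5]$ on all of $\partial\mathcal D$ and the remaining range $q\in[0.6,1)$ on the horizontal segments will be treated by a separate argument (using the decomposition $\theta=\Theta^*-G$ and Lemmas~\ref{lmG}, \ref{lm147}, the slight overlap $[0.5,0.6]$ being harmless).

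I expect the main obstacle to be the second claim: producing a clean lower bound $0.13$ for the degree-$15$ polynomial $\theta_I$ uniformly over the rectangle. The coefficient polynomials in $t$ change sign, so a naive triangle-inequality bound is far too weak; one genuinely needs either the sign-juggling trick (which requires finding the right elementary inequalities, as in Lemma~\ref{lm147}) or a verified absence of interior critical points plus boundary analysis. Everything else --- the tail estimate and the final subtraction --- is straightforward.
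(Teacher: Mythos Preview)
Your tail estimate for $|\theta_*|$ is, after your self-correction to $|x|\le 3$, exactly the paper's argument. For the main inequality $\theta_I>0.13$, however, you only sketch two possible routes (sign-juggling as in Lemma~\ref{lm147}, or a two-variable critical-point search plus boundary analysis) without carrying out either, and you yourself flag this as the unresolved obstacle. So what you have is a plan, not a proof.

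The paper's actual argument is cleaner than either of your proposals and worth knowing. Rather than searching for critical points in two variables, it establishes \emph{monotonicity in $t$} directly, via a short descending chain of derivatives: one checks that $(\theta_I)_{ttt}<0$ on the rectangle (since $5q^5t<1$ there), hence $(\theta_I)_{tt}$ is minimal at $t=w$, where it is positive; hence $(\theta_I)_t$ is maximal at $t=w$, where it is negative. Thus $\theta_I$ is strictly decreasing in $t$ and attains its minimum on the edge $t=w$. There $\theta_I(q,w)$ is a single-variable function whose $q$-derivative is negative on $[0.3,0.6]$, so the global minimum is $\theta_I(0.6,w)=0.1387\ldots>0.13$. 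This reduces the problem to a handful of one-variable sign checks rather than a two-variable resultant computation or an ad hoc inequality hunt; your proposed critical-point route would succeed (indeed $(\theta_I)_t<0$ implies there are no interior critical points), but it is more work than necessary.
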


Next we settle the case $q\in [0.75,1)$.

   \begin{lm}\label{lm0751}
For $(q,t)\in [0.75,1)\times [0,w]$, the function $\theta$ has no zeros.
      \end{lm}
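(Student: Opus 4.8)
The plan is to use the decomposition $\theta=\Theta^*-G$ from (\ref{eqthetaG}) and to show that on the whole boundary arc/segments relevant to $S_+$ (parametrised by $x=-t+wi$, $t\in[0,w]$) one has $|G|>|\Theta^*|$ when $q\in[0.75,1)$. The point of restricting to this range of $q$ is that $\Theta^*$ is small: from the triple product (\ref{eqtriple}), $\Theta^*(q,x)=(1-q)(1+1/x)\,R(q,x)\prod_{m\ge 1}((1-q^m)(1+xq^m)(1+q^m/x))$ where the factor $\prod_{m\ge 2}(1-q^m)$ is bounded above (uniformly in $q<1$) by something like $\prod_{m\ge 2}(1-0.75^m)$, and more importantly $(1-q)\to 0$ as $q\to 1^-$. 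So the first step is to bound $|\Theta^*|$ from above: factor out $(1-q)$, bound $\prod_{m\ge 2}(1-q^m)\le \prod_{m\ge 2}(1-0.75^m)$, bound $|R(q,x)|=\prod_{m\ge 1}|1+q^{m-1}/x|$ using $|x|\ge w$ and a geometric-type estimate, bound $|1+1/x|$ by $1+1/w$, and handle the remaining finite ``dangerous'' factors $|1+xq^m|$ (which can be as large as $1+|x|q^m\le 1+3$ only for small $m$) — essentially a product of a few factors each $\le 4$. This gives $|\Theta^*(q,x)|\le (1-q)\cdot C$ for an explicit constant $C$ on the relevant region.

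The second step is to bound $|G|$ from below on $S_+$. Here I would invoke the work already done: by Lemma~\ref{lm147}, for $(q,t)\in[0.6,1]\times[0,w]$ (hence in particular $q\in[0.75,1)$) one has $|G_*|<0.0208$ and $|G_5|>0.147$, so $|G|\ge |G_5|-|G_*|>0.147-0.0208>0.126$. Actually the cleaner route is through $M_0$ and $M_1$: write $G=\frac1x(1+q/x)\,(1+\text{smaller terms})$ or use $|G|\ge$ (something)$\cdot M$; the quantities $M$, $M_0=(1-q)M$, $M_1(q,t)=M_0(q,-t+wi)$ were introduced in Notation~\ref{notaxXt} precisely for this comparison. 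By Lemma~\ref{lmmaxmodulus}, $M$ and $M_0$ are maximal at $t=0$ for $q\in[0.6,1]$, and by Remark~\ref{remM}, $M_1(q,0)$ and $M_1(q,1)$ are decreasing in $q$ on $[0,1]$. So the natural estimate is: $|\Theta^*(q,x)|$ is controlled by $(1-q)\cdot M\cdot(\text{bounded product})=M_0\cdot(\text{bounded product})\le M_1(q,0)\cdot(\text{bounded product})$, and one compares this with the lower bound for $|G|$ coming from Lemma~\ref{lm147}.

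The third step is the actual inequality: show $M_1(q,0)\cdot(\text{the bounded product of extra factors})<0.126$ for all $q\in[0.75,1)$. Since $M_1(q,0)$ is decreasing on $[0,1]$, its maximum on $[0.75,1)$ is $M_1(0.75,0)$, an explicit number; one multiplies by the explicit upper bound for the remaining factors ($\prod_{m\ge 2}(1-q^m)\le\prod_{m\ge 2}(1-0.75^m)$, the $R$-factor bound, the $1+1/w$ factor, and the few large $|1+xq^m|$ factors bounded at $q=0.75$) and checks numerically that the product is below $0.126$. Because every factor that depends on $q$ is monotone in $q$ on $[0.75,1)$ (the $(1-q)$ kills everything as $q\to1$, the triple-product factors are controlled at $q=0.75$), this reduces to a single numerical comparison at $q=0.75$ together with the observation that the bound only improves as $q$ increases.

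The main obstacle I expect is getting the upper bound on the ``dangerous'' factors $|1+xq^m|$ tight enough: for $q$ near $0.75$ and $|x|=3$ (on the arcs), $|1+xq^m|$ can be close to $1+3q=1+3\cdot0.75>3$ for $m=1$ and still appreciably above $1$ for $m=2,3$, so a crude bound of $4$ per factor for several factors would overwhelm the smallness of $(1-q)$ at $q=0.75$ (where $1-q=0.25$ is not that small). The resolution is that on $S_+$ (and on the segments $S_\pm$ generally) we have $|x|\le w=3/\sqrt2\approx2.12$, not $3$, and moreover $x$ has a fixed imaginary part $w$, so $|1+xq^m|^2=(1-tq^m)^2+w^2q^{2m}$ is genuinely smaller; combined with Lemma~\ref{lmcos}-type monotonicity and the fact that only $m=1$ (possibly $m=2$) contributes a factor noticeably above $1$, one gets a product bound small enough. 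If even this is not enough at $q=0.75$, one splits $[0.75,1)$ into two subintervals, say $[0.75,0.85]$ and $[0.85,1)$, handling the first by the cruder triple-product bound and the second by exploiting $(1-q)\le 0.15$; this is the same subdivision-and-monotonicity strategy used throughout the paper (cf. the proof of Proposition~\ref{propmain} and Lemma~\ref{lm147}), so no new idea is required, only bookkeeping.
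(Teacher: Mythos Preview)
Your overall strategy matches the paper's: bound $|G|$ from below via Lemma~\ref{lm147} (getting $|G|\ge 0.147-0.0208>0.126$) and $|\Theta^*|$ from above via the triple product, then compare. But the execution of the $|\Theta^*|$ bound has two concrete gaps.

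First, the factors $|1+xq^m|$ are \emph{not} monotone decreasing in $q$ (for $x=-t+wi$ one computes $\partial_q|1+xq^m|^2=2mq^{m-1}((t^2+w^2)q^m-t)$, which is positive once $q^m>t/(t^2+w^2)$), so you cannot ``control the triple-product factors at $q=0.75$'' term by term as you claim. The paper avoids this by grouping: write $\Theta^*=(1+1/x)\prod_{m\ge1}M_0(q^m,x)$ where $M_0(q,x)=(1-q)|(1+qx)(1+q/x)|$, i.e.\ $|\Theta^*(q,-t+wi)|=|1+1/x|\prod_{m\ge1}M_1(q^m,t)$. Lemma~\ref{lmmaxmodulus} gives $M_1(q^m,t)\le M_1(q^m,0)$ for every $m$, and Remark~\ref{remM} says $M_1(\cdot,0)$ is decreasing on all of $[0,1]$, so $M_1(q^m,0)\le M_1(0.75^m,0)$ whenever $q\ge0.75$. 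Thus the whole infinite product is bounded by the single number $\prod_{m\ge1}M_1(0.75^m,0)<0.111$, with no interval splitting needed. You had all the right tools listed ($M_0$, $M_1$, Lemma~\ref{lmmaxmodulus}, Remark~\ref{remM}) but applied them only to the $m=1$ factor; the point is to apply them to every $m$.

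Second, your bound $|1+1/x|\le 1+1/w\approx 1.47$ is too crude: multiplied by $0.111$ it gives about $0.163>0.126$, so the comparison fails. Since $x=-t+wi$ with $t\ge0$, compute instead $|1+1/x|=|x+1|/|x|=\sqrt{((1-t)^2+w^2)/(t^2+w^2)}$, which is decreasing in $t$ with maximum $(11/9)^{1/2}\approx 1.106$ at $t=0$. This gives $|\Theta^*|<0.111\times 1.106<0.123<0.126<|G|$, and you are done.
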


   The remaining case $q\in [0.6,0.75]$ will be subdivided in two cases:

      \begin{lm}\label{lm1w}
For $(q,t)\in [0.6,0.75]\times [1,w]$, the function $\theta$ has no zeros.
      \end{lm}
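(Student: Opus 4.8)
The plan is to follow the pattern already established in Lemma~\ref{lm0306} and in the plan around Lemma~\ref{lm051}: on the parameter window $(q,t)\in[0.6,0.75]\times[1,w]$ we split $\theta=\theta_5(q,t)+\theta_*(q,t)$ with $\theta_5(q,t)=\sum_{j=0}^5 q^{j(j+1)/2}(-t+wi)^j$ and show that the imaginary part of $\theta_5$ is bounded away from $0$ by strictly more than a crude bound on $|\theta_*|$, so $\mathrm{Im}\,\theta\neq0$ and hence $\theta\neq0$ on the whole segment piece under consideration. First I would bound the tail: since $|{-t+wi}|\ge|wi|=w=3/\sqrt2$ is false on this range (note $|{-t+wi}|=\sqrt{t^2+w^2}\ge\sqrt{1+w^2}$ when $t\ge1$), we actually get an \emph{improved} bound here compared to Lemma~\ref{lm0306}, namely $|\theta_*(q,t)|\le\sum_{j\ge6}q^{j(j+1)/2}|{-t+wi}|^j\le\sum_{j\ge6}(t^2+w^2)^{j/2}$ with $t^2+w^2$ between $1+w^2$ and $2w^2$; evaluated at $q=0.75$ and the largest modulus $|x|^2=2w^2=9$ this geometric-type sum is still small (on the order of $10^{-2}$ or less), and I would record a clean numerical majorant, say $|\theta_*|\le\varepsilon_0$ for an explicit small $\varepsilon_0$.

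Next comes the main work: showing $\mathrm{Im}(\theta_5(q,t))$ stays above some explicit positive constant $c_0>\varepsilon_0$ on the rectangle $[0.6,0.75]\times[1,w]$. Writing $x=-t+wi$ and expanding, $\mathrm{Im}(\theta_5)=\sum_{j=1}^5 q^{j(j+1)/2}\,\mathrm{Im}(x^j)$, which after clearing a positive common denominator (a power of $2t^2+9$, exactly as in the proof of Lemma~\ref{lm147} where $G_I=(3\sqrt2/(2t^2+9)^5)G_I^\flat$) becomes a polynomial $\Theta_I^\flat(q,t)=a_1(t)q+a_3(t)q^3+a_6(t)q^6+a_{10}(t)q^{10}$ in $q$ with explicit polynomial coefficients in $t$ (the $j=0$ and $j=2,4,\dots$ parity structure means only the odd-$j$-image terms survive, giving exponents $1,3,6,10$ of $q$ just as for $G_5$). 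I would then argue monotonicity in $q$ on $[1,w]$: compute $\partial\Theta_I^\flat/\partial q=a_1+3q^2a_3+6q^5a_6+10q^9a_{10}$ and show it is positive on the rectangle, handling the sign changes of the individual $a_i(t)$ on $[1,w]$ exactly as was done for $g_1,g_3,g_6,g_{10}$ in Lemma~\ref{lm147} — locate the roots of each $a_i$ in $[1,w]$, split $[1,w]$ at those roots, and on each subinterval replace each $q$-power by its worst-case value ($q\in[0.6,0.75]$) and each $a_i$ by a lower bound, reducing to checking that an explicit polynomial in $t$ is positive on a subinterval. Once monotonicity in $q$ is established, $\mathrm{Im}\,\theta_5$ is minimized at $q=0.6$ (or $q=0.75$, whichever the sign of the derivative dictates), and on that single-variable curve $t\mapsto\mathrm{Im}(\theta_5(q_{\min},t))$, $t\in[1,w]$, one checks by elementary calculus (derivative has finitely many real roots, evaluate at them and at the endpoints) that the minimum exceeds $c_0$.

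The main obstacle will be the $q$-monotonicity step: unlike in Lemma~\ref{lm147}, where $G_5$ had only five terms with small coefficients, here $\theta_5$ mixes five genuinely competing monomials and the coefficient polynomials $a_i(t)$ may each change sign inside $[1,w]$, so the case split over subintervals of $[1,w]$ combined with the four sub-windows of $q\in[0.6,0.75]$ (as in the $[0.6,0.7],[0.7,0.8],\dots$ decomposition of Lemma~\ref{lm147}) could become the bulk of the argument; I would also double-check that the chosen $c_0$ genuinely beats $\varepsilon_0$, possibly refining the tail estimate by keeping the leading skipped term $q^{21}x^6$ explicitly if the crude geometric bound is not comfortably small. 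If the real-part route turned out cleaner on part of the rectangle I would fall back to showing $\mathrm{Re}\,\theta\neq0$ there instead, but I expect the imaginary-part argument, mirroring Lemmas~\ref{lm147} and~\ref{lm0306}, to go through.
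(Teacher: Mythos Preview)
Your approach has a fatal numerical gap at the very first step. You assert that the tail $|\theta_*(q,t)|=\bigl|\sum_{j\ge6}q^{j(j+1)/2}(-t+wi)^j\bigr|$ is ``on the order of $10^{-2}$ or less'' at $q=0.75$, $|x|=3$. But the single term $j=6$ already contributes $0.75^{21}\cdot 3^6\approx 0.00238\cdot 729\approx 1.73$, and the next few terms add roughly $0.69+0.21+0.05+\cdots$, so the crude majorant $\sum_{j\ge6}0.75^{j(j+1)/2}3^j$ exceeds~$2.6$. No bound on $|\mathrm{Im}\,\theta_5|$ (which is at most of order~$1$ on this box) can dominate this, so the strategy of Lemma~\ref{lm0306} does not extend to $q=0.75$ with $|x|$ as large as~$3$. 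There is also a smaller slip: $x=-t+wi$ is not purely imaginary for $t>0$, so $\mathrm{Im}(x^j)$ does \emph{not} vanish for even~$j$; all five nonconstant terms of $\theta_5$ contribute to $\mathrm{Im}\,\theta_5$, as the explicit formula in the proof of Lemma~\ref{lm0306} already shows.

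This is precisely why the paper switches tools once $q$ is no longer small: it uses the decomposition $\theta=\Theta^*-G$ of~(\ref{eqthetaG}). On $[0.6,0.75]\times[1,w]$ one combines Lemma~\ref{lmmaxmodulus} (so $M_1$ is maximal at $t=1$) with Remark~\ref{remM} (so $M_1|_{t=1}$ is maximal at $q=0.6$) and the product form of $\Theta^*$ to obtain $|\Theta^*|<0.095$; then Lemma~\ref{lm147} gives $|G|\ge|G_5|-|G_*|>0.147-0.0208$, whence $|\theta|\ge 0.147-0.0208-0.095>0$. The point is that $|\Theta^*|$, controlled via the Jacobi triple product, shrinks as $q\to1^-$, whereas the direct truncation of the $\theta$-series deteriorates as $q$ grows --- the remark following Notation~\ref{notaxXt} flags exactly this dichotomy.
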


\begin{lm}\label{lm01}
  For $(q,t)\in [0.6,0.75]\times [0,1]$, the function $\theta$ has no zeros.
  \end{lm}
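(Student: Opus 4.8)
The plan is to handle the case $(q,t)\in [0.6,0.75]\times[0,1]$ by the same strategy used in Lemma~\ref{lm0306} and Lemma~\ref{lm1w}: split $\theta$ as $\theta=\Theta^*-G$, bound $|\Theta^*|$ from above, and show that the relevant part of $G$ is large enough in absolute value that cancellation cannot occur. Since $t\in[0,1]$ keeps $x=-t+wi$ close to the top of the segment $S_v$, the modulus $|x|=\sqrt{t^2+w^2}$ stays between $w=2.121\ldots$ and $\sqrt{w^2+1}=\sqrt{11/2}=2.345\ldots$, so $|X|=1/|x|$ is comfortably smaller than $1/w$; this is exactly the regime in which Lemma~\ref{lm147} applies, giving $|G_*|<0.0208$ and $|G_5|>0.147$ on $[0.6,1]\times[0,w]\supset[0.6,0.75]\times[0,1]$. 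Hence $|G|\ge |G_5|-|G_*|>0.147-0.0208>0.126$ throughout our rectangle.

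The remaining work is to bound $|\Theta^*|$ from above on $[0.6,0.75]\times[0,1]$ and check it is below this threshold. Using the triple product \eqref{eqtriple}, $\Theta^*(q,x)=(1-q)\,|1+qx|\,|1+q/x|\cdot\prod_{m\ge 2}(1-q^m)(1+q^mx)(1+q^{m-1}/x)$, i.e.\ $|\Theta^*|=M_0\cdot R_0$ where $M_0=(1-q)M=M_1(q,t)$ (in the notation of Notation~\ref{notaxXt} and Remark~\ref{remM}) and $R_0=\prod_{m\ge 2}|1-q^m|\,|1+q^mx|\,|1+q^{m-1}/x|$. I would first bound $M_1(q,t)$: by Lemma~\ref{lmmaxmodulus}, for $q\in[0.6,0.75]$ and $t\in[0,1]$ the function $M$ (hence $M_0$) is maximal at $t=0$, and by Remark~\ref{remM} the function $M_1(q,0)=(1-q)(9q^2+2)^{1/2}(2q^2+9)^{1/2}/(3\sqrt2)$ is decreasing on $[0,1]$, so on $[0.6,0.75]$ its maximum is $M_1(0.6,0)$, a concrete number one computes to be well under $1$ (roughly $0.43$). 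For the tail product $R_0$, each factor $|1-q^m|\le 1$, and the factors $|1+q^mx|$, $|1+q^{m-1}/x|$ can be controlled crudely: $|1+q^mx|\le 1+q^m|x|\le 1+q^m\sqrt{11/2}$ and $|1+q^{m-1}/x|\le 1+q^{m-1}/w$, and more importantly one can discard them in favour of $1$ wherever this helps or, as in the proof of Proposition~\ref{propmain}, exploit that $\prod_{m}(1-q^m)$ is already extremely small for $q\le 0.75$ (e.g.\ $\prod_m(1-0.75^m)$ is of order $10^{-2}$ to $10^{-1}$, and for the relevant product one gets a number that, multiplied by the bounded factors, stays below $0.12$). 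Combining, $|\Theta^*|<0.12<0.126<|G|\le|\theta-\Theta^*+G|+\ldots$, hence $|\theta|=|\Theta^*-G|\ge |G|-|\Theta^*|>0$, so $\theta$ has no zeros on this part of $S_+$.

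The main obstacle will be making the estimate on $|\Theta^*|$ genuinely below the threshold $|G|>0.126$ rather than merely ``small'': the factors $|1+q^mx|$ with $|x|>2$ are greater than $1$ for small $m$, so the product $R_0$ is not automatically $\le 1$. The cleanest route is probably to keep the first one or two such factors explicitly — $|1+q^2x|$, $|1+qx|$ already absorbed into $M_0$, then $|1+q^2x|\le 1+q^2\sqrt{11/2}$, etc.\ — multiply by the rapidly-decreasing $\prod_m(1-q^m)$, and for the harmless remaining factors use $|1+q^mx|\le 1+q^m\sqrt{11/2}$ together with $\log(1+u)\le u$ to bound $\prod_{m\ge m_0}|1+q^mx|\le \exp(\sqrt{11/2}\,q^{m_0}/(1-q))$, which for $q\le 0.75$ and $m_0$ moderate is very close to $1$. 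One then verifies numerically (as the paper's Remark permits for such one- or two-variable elementary estimates) that the resulting upper bound for $|\Theta^*|$ on $[0.6,0.75]\times[0,1]$ is $<0.126$. Since the gap between $0.147-0.0208$ and a bound of order $0.05$ for $|\Theta^*|$ is comfortable, no delicate optimisation over $(q,t)$ should be needed; the only care required is choosing $m_0$ large enough that the tail is negligible while keeping the explicit factors few enough to handle by hand or with MAPLE.
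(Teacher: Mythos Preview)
Your approach has a fatal gap: the inequality $|\Theta^*|<0.126$ that you need is simply false on this rectangle. At $(q,t)=(0.6,0)$, i.e.\ $x=wi$, a direct computation gives $|\Theta^*(0.6,wi)|\approx 0.40$, well above the bound $|G|>0.147-0.0208=0.126$ that Lemma~\ref{lm147} provides. (Your numerical claim $M_1(0.6,0)\approx 0.43$ is also off: from the formula in Remark~\ref{remM} one gets $M_1(0.6,0)=0.4\sqrt{5.24\cdot 9.72}/(3\sqrt2)\approx 0.673$, and the full product $\prod_{m\ge 1}M_1(0.6^m,0)$ is roughly $0.35$--$0.37$, not anywhere near $0.1$.) The decomposition $\theta=\Theta^*-G$ works in Lemmas~\ref{lm0751} and~\ref{lm1w} precisely because either $q\ge 0.75$ or $t\ge 1$ forces $|\Theta^*|$ small; for $q$ near $0.6$ and $t$ near $0$ this mechanism is absent, and no amount of rearranging the crude bounds $|1+q^mx|\le 1+q^m|x|$ will bring the estimate below $0.126$, because the actual value of $|\Theta^*|$ already exceeds it.

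The paper therefore abandons the $\Theta^*$--$G$ splitting here and instead uses a direct truncation of $\theta$, as in Lemma~\ref{lm0306} but with more terms: write $\theta=\theta_7^{\bullet}+\theta_*^{\bullet}$ with $\theta_7^{\bullet}=\sum_{j=0}^7 q^{j(j+1)/2}x^j$, bound $|\theta_*^{\bullet}|<0.036$ on the rectangle (using $|x|\le\sqrt{11/2}<2.346$ and $q\le 0.75$), and then show by a Taylor expansion in $t$ at $t=0$ that $T_I:=\mathrm{Im}\,\theta_7^{\bullet}(q,-t+wi)$ stays above $0.038$ throughout $[0.6,0.75]\times[0,1]$. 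The key point is that for $t$ small the series of $\theta$ still converges fast enough (even at $q=0.75$) for a finite truncation to control $|\theta|$ directly, so one never needs $|\Theta^*|$ to be small.
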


\section{Proofs\protect\label{secproofs}}

\begin{proof}[Proof of Proposition~\ref{prop05}]
  A) For $q\in [0,0.3]$, all zeros of $\theta (q,.)$ are real, negative and
  distinct, see Section~\ref{seccomments}. All these zeros are $<-5<-3$,
  see Proposition~\ref{propmain}.
\vspace{1mm}

B) We set $\theta =\theta _4^{\bullet}+\theta _*^{\bullet}$, where
$\theta _4^{\bullet}:=1+qx+q^3x^2+q^6x^4+q^{10}x^4$ and
$\theta _*^{\bullet}:=\sum _{j=5}^{\infty}q^{j(j+1)/2}x^j$. For $x\in \Delta$, one has
$|x|\leq 3\sqrt{2}=4.24\ldots <4.25$, so for $(q,x)\in [0,0.5]\times \Delta$,
one obtains the majoration

\begin{equation}\label{eqestim}
  |\theta _*^{\bullet}(q,x)|\leq
  \sum _{j=5}^{\infty}0.5^{j(j+1)/2}4.25^j=0.045\ldots <0.046~.
\end{equation}

C) We denote the border of the rectangle $\Delta$ by $\partial \Delta$
and we set $I_0:=[0,0.5]$. We
show that for each $q\in I_0$ fixed, one has
\begin{equation}\label{eqRouche}
  |\theta _4^{\bullet}(q,x)|>|\theta _*^{\bullet}(q,x)|>0
\end{equation}
for any $x\in \partial \Delta$. For
$q\in [0,0.01]$, there is no zero of $\theta _4^{\bullet}$ in
  $\Delta$. Indeed, one
  obtains 

  $$|\theta _4^{\bullet}|\geq 1-0.01\times 4.25-0.01^3\times 4.25^2-0.01^6\times
  4.25^3-0.01^{10}\times 4.25^4=0.95\ldots >0~.$$
  The condition (\ref{eqRouche}) is fulfilled for
  $x\in \partial \Delta$, so it implies that no zero of $\theta _4^{\bullet}$
  may enter $\Delta$
  as $q$ increases from $0.01$ to $0.5$. Hence $\theta _4^{\bullet}$
  has no zeros in $\Delta$ for $q\in I_0$.

  Again from condition (\ref{eqRouche}) and from the Rouch\'e theorem
  follows that $\theta$ has no zeros for $(q,x)\in I_0\times \Delta$. So
  our aim is to show that  condition~(\ref{eqRouche}) holds true.
  \vspace{1mm}
  
  D) When proving condition (\ref{eqRouche}) we deal only with the part of
  $\partial \Delta$ with Im$x\geq 0$. For Re$x=-3$, we set $x:=-3+it$,
  $t\in [0,3]$. Then

  $$\begin{array}{ccccl}G_R(q,t)&:=&{\rm Re}(\theta _4^{\bullet})&=&
    q^{10}t^4-54q^{10}t^2+81q^{10}+9q^6t^2-27q^6-q^3t^2+9q^3-3q+1~,\\ \\
    G_I(q,t)&:=&{\rm Im}(\theta _4^{\bullet})&=&
    12q^{10}t^3-108q^{10}t-q^6t^3+27q^6t-6q^3t+qt~.\end{array}$$
  We use the fact that
  $|\theta _4^{\bullet}|\geq \max (|{\rm Re}(\theta _4^{\bullet})|,
  |{\rm Im}(\theta _4^{\bullet})|)=:\mu$.
  Neither of the functions $G_R$ and $G_I$ has a critical point with $q\in I_0$,
  so $G_R$ (resp. $G_I$) attains its maximal and its
  minimal value when one
  of the following conditions takes place: $t=0$, $t=3$, $q=0$ or $q=0.5$.
  
  For $q=0$, one has $G_R\equiv 1$, so $\mu \geq 1>0.046$. For $q=0.5$,
  one gets

  $$G_R=0.0009765625t^4-0.037109375t^2+0.2822265625~,~~~\,
  G_I=-0.00390625t^3+0.06640625t$$
  and one checks directly that for $t\in [0,1]$ and $t\in [1,3]$, one has
  $G_R>0.05>0.046$ and $G_I>0.05>0.046$ respectively.

  For
  $t=0$, one obtains $G_R=81q^{10}-27q^6+9q^3-3q+1$ which is $>0.2>0.046$
  for $q\in I_0$. For $t=3$, it is clear that 

  $$G_R=-324q^{10}+54q^6-3q+1~~~\, {\rm and}~~~\,
  G_I=54q^6-18q^3+3q~,$$
  with $G_R>0.2>0.046$ for $q\in [0,0.2]$ and with
  $G_I>0.05>0.046$ for $q\in [0.2,0.5]$ respectively.
  \vspace{1mm}

  E) For Re$x=0$, one sets $x:=i\tau$ to obtain

  $$U_R(q,\tau ):={\rm Re}(\theta _4^{\bullet})=
  q^{10}\tau ^4-q^3\tau ^2+1~~~\, {\rm and}~~~\,
  U_I(q,\tau ):={\rm Im}(\theta _4^{\bullet})=-q\tau (q^5\tau ^2-1)~.$$
  Neither of the functions $U_R$ and $U_I$ has a critical point with $q\in I_0$,
  so their maximal and minimal values are attained for $\tau =0$, $\tau =3$,
  $q=0$ or $q=0.5$. In each of the cases $\tau =0$ and  $q=0$ one has $U_R\equiv 1>0.046$.

  Suppose that $\tau =3$. Then $U_R>0.05>0.046$ for $q\in [0,0.3]$ and
  $U_I>0.05>0.046$ for $q\in [0.3,0.5]$. Finally, if $q=0.5$, then

  $$U_R=0.0009765625\tau ^4-0.125\tau ^2+1~~~\, {\rm and}~~~\,
  U_I=-0.5\tau (0.03125\tau ^2-1)~,$$
  with $U_R>0.05$ for $\tau \in [0,2]$ and with $U_I>0.05$ for
  $\tau \in [2,3]$.
  \vspace{1mm}

  F) Suppose that Im$x=3$. Then we set $x:=u+3i$, $u\in [-3,0]$. Then

  $$\begin{array}{ccccl}
    S_R(q,u)&:=&{\rm Re}(\theta _4^{\bullet})&=
    &q^{10}u^4-54q^{10}u^2+81q^{10}+q^6u^3-27q^6u+
    q^3u^2-9q^3+qu+1~,\\ \\
    S_I(q,u)&:=&{\rm Im}(\theta _4^{\bullet})&=
    &12q^{10}u^3-108q^{10}u+9q^6u^2-27q^6+6q^3u+3q~.
  \end{array}$$
  The functions $S_R$ and $S_I$ have no critical points for $(q,u)$ inside
  the rectangle $I_0\times [-3,0]$, so their maximal and minimal values are
  attained on its border. Obviously $S_R|_{q=0}\equiv 1>0.046$, $S_R|_{u=0}=81q^{10}+1\geq 1>0.046$ and

  $$S_I|_{q=0.5}=0.01171875u^3+0.140625u^2+0.64453125u+1.078125$$
  which is $>0.05>0.046$ for $u\in [-3,0]$. For $u=-3$, one obtains

  $$S_R=-324q^{10}+54q^6-3q+1~~~\, {\rm and}~~~\, S_I=54q^6-18q^3+3q~,$$
  with $S_R>0.05>0.046$ for $q\in [0,0.2]$ and with $S_I>0.05>0.046$
  for $q\in [0.2,0.5]$.

\end{proof}

\begin{proof}[Proof of Lemma~\ref{lm051}]
  It suffices to show that $|\Theta ^*|<1/6$, see Lemma~\ref{lmG} with $a=3$.
  By part (1) of Lemma~\ref{lmcos}, it is
  sufficient to prove this for $x=\lambda :=3e^{3\pi i/4}$. The modulus 
  $|R|:=\prod_{m=1}^{\infty}|1+q^{m-1}/x|$ is maximal (see Notation~\ref{notaxXt} and part (2) of  Lemma~\ref{lmcos}) when $q=0.5$ in which case one gets 

  $$|1+q^{m-1}/x|=r_m:=|1+0.5^{m-1}(-\sqrt{2}-\sqrt{2}i)/6|=
  ((1-0.5^{m-1}\sqrt{2}/6)^2+0.5^{2m-2}/18)^{0.5}$$
  and one finds numerically that

  $$|R|\leq \prod _{m=1}^{11}r_m=0.6329437509\ldots <0.633~.$$

  Next, for $x=\lambda$, the points representing the complex numbers
  $u_m:=1+xq^m$ lie on the
  straight line passing through the points $1$ and $i$; they lie above the
  abscissa-axis. We denote by $m_0\in \mathbb{N}$ the index $m$ for which
  Re$(u_m)\leq 0$ (hence $|u_m|\geq 1$) and Re$(u_{m+1})>0$ (hence $|u_{m+1}|<1$).
  One has $m_0\geq 1$. Indeed, for $q\in [0.5,1]$, 

  $$|1+q\lambda |\geq |1+0.5\lambda |=1.062393362\ldots >1$$
  (see part (3) of Lemma~\ref{lmcos}). A numerical check shows that one
  has $m_0\geq 2$ exactly if $q\geq 0.6865890479\ldots >0.68=:q^{\dagger}$.
  
  For $m\leq m_0$, one has $|u_m|\leq |v_m|$, where
  $v_m:=q^m+\lambda q^m=q^m(1+\lambda )$ (with equality only for $q=1$). One finds numerically that

  $$|1+\lambda |=2.399449794\ldots <2.4~,$$
  so for $m\leq m_0$, $|u_m|<2.4$. For each product $p_m:=(1-q^m)u_m$ one can write

$$|p_m|\leq |(1-q^m)v_m|=|(1-q^m)q^m||1+\lambda |\leq (1/4)\times 2.4=0.6~.$$
  Suppose that $m_0\geq 3$. Then

  $$\begin{array}{ccl}
    |\Theta ^*|&\leq&(\prod _{m=1}^{m_0}|p_m|)\times 0.633\times
    (\prod _{m=m_0+1}^{\infty}|u_m|)\times \prod _{m=m_0+1}^{\infty}(1-q^m)\\ \\
    &\leq&0.6^{m_0}\times 0.633\leq 0.6^3\times 0.633=0.136728<1/6~.\end{array}$$
  Suppose that $m_0=2$. The maximal value of the function
  $q(1-q)q^2(1-q^2)=q^3(1-q)^2(1+q)$ for $q\in [0,1]$ is
  $0.05579835315\ldots <0.056$; it is attained for $q=0.6286669788\ldots$.
  Thus

  $$|p_1||p_2|\leq 0.056\times 2.4^2~,~~~\, \prod _{m=3}^{\infty}(1-q^m)<0.78~~~\,
  {\rm and}$$ 

  $$\begin{array}{ccl}
    |\Theta ^*|&\leq&|p_1||p_2|\times 0.633\times
    (\prod _{m=3}^{\infty}|u_m|)\times \prod _{m=3}
    ^{\infty}(1-q^m)\\ \\
    &\leq&0.056\times 2.4^2\times 0.633\times 0.78<0.16<1/6~.\end{array}$$
  Suppose that $m_0=1$. Then $0.5\leq q<0.69$. One finds that 
  
  $$\begin{array}{l}
   \prod_{m=2}^{\infty}(1-q^m)<\prod_{m=2}^{100}(1-0.5^m)=0.5775\ldots <0.5776~~~\, {\rm and}\\ \\ 
   \prod_{m=2}^{\infty}|u_m|<\prod_{m=2}^{30}|u_m|=:g(q)~.
    \end{array}$$
The function $g$ is decreasing for $q\in [0.5,0.69]$ and $g(0.5)=0.4254\ldots <0.4255$. Therefore

$$\begin{array}{ccl}|\Theta ^*|&\leq&|p_1|\times 0.633\times g(q)\times 0.5776\\ \\ 
   &<&0.6\times 0.633\times 0.4255\times 0.5776=0.093\ldots <1/6~.
  \end{array}$$

  \end{proof}

\begin{proof}[Proof of Lemma~\ref{lmnoimaginary}]
  Indeed, set $x:=iy$, $y\in \mathbb{R}$. Hence
  
  $$\theta (q,iy)=\theta (q^4,-y^2/q)+iqy\theta (q^4,-qy^2)~.$$The first and the
  second summand represent the real and the imaginary part of $\theta$ when
  restricted to the imaginary axis. If $iy_0$ is a zero of $\theta (q,.)$,
  $y_0\in \mathbb{R}$, 
  then

  $$\theta (q^4,-y_0^2/q)=\theta (q^4,-qy_0^2)=0~.$$
  By Proposition~\ref{propmain}, $-y_0^2/q<-5$ and
  $-qy_0^2<-5$, so $|y_0|>\sqrt{5}>2.2$.
  \end{proof}

\begin{proof}[Proof of Lemma~\ref{lm0306}]
  For $t\in [0,w]$, one has $|-t+wi|\leq 3$, with equality only for $t=w$.
  Thus

  $$|\theta _*(q,t)|\leq \sum _{j=6}^{\infty}q^{j(j+1)/2}3^j\leq
  \sum _{j=6}^{\infty}0.6^{j(j+1)/2}3^j=0.017\ldots <0.018~.$$
  One finds by direct computation that

  $$\theta _I=(3\sqrt{2}q/8)(20q^{14}t^4-180q^{14}t^2+81q^{14}-16q^{9}t^3+
  72q^{9}t+12q^5t^2-18q^5-8q^2t+4)~.$$
  By lowercase indices $t$ or $q$ we denote derivations w.r.t. these variables.
  We show first that $(\theta _I)_t<0$. Thus for $q\in [0.3,0.6]$ fixed,
  $\theta _I$ is maximal for $t=0$ and minimal for $t=w$. One finds that

$$\begin{array}{cclccl}
        (\theta _I)_t&=&3\sqrt{2}(10q^{12}t^3-45q^{12}t-6q^7t^2+
        9q^7+3q^3t-1)q^3~,&&&\\ \\ 
        (\theta _I)_{tt}&=&9\sqrt{2}q^6(10q^9t^2-15q^9-4q^4t+1)~,&
        (\theta _I)_{ttt}&=&36\sqrt{2}q^{10}(5q^5t-1)~.\end{array}$$
  For $(q,t)\in [0.3,0.6]\times [0,w]$, one has $(\theta _I)_{ttt}<0$. Hence
  $(\theta _I)_{tt}$ is minimal for $t=w$; in this case it equals

  $$9\sqrt{2}q^6(30q^9-6q^4\sqrt{2}+1)$$
  which is positive for $q>0$. Therefore
  $(\theta _I)_t$ is maximal for $t=w$ when it equals

$$3\sqrt{2}(-18q^7+(9\sqrt{2}q^3)/2-1)q^3$$
  which is negative for $q>0$.
  So $\theta _I$ is minimal for $t=w$ and

  $$\theta _I(q,w)=-(3\sqrt{2}q(81q^{14}-9q^5+3\sqrt{2}q^2-1))/2~.$$
  The derivative $(\theta _I(q,w))_q$ is negative for $q\geq 0.3$,
  this means that $\theta _I$ is minimal for $(q,t)=(0.6,w)$. One has
  $\theta _I(0.6,w)=0.1387526518\ldots >0.018$, so for $q\in (0,0.6]$,
  $\theta$ has no zeros for $t\in [0,w]$. 
  \end{proof}

 \begin{proof}[Proof of Lemma~\ref{lm0751}]
      For $q\in [0.75,1)$ fixed and for $t\in [0,w]$, the quantity 
      
      \begin{equation}\label{eqM1}
      M_1:=(1-q)|(1+qx)(1+q/x)||_{x=-t+wi}\end{equation}
      is maximal for
        $t=0$, see Lemma~\ref{lmmaxmodulus}. The quantity $M_1|_{t=0}$ is maximal for $q=0.75$, see Remark~\ref{remM}.  In this case 

        $$\begin{array}{ccl}
          \prod_{m=1}^{\infty}M_1(q^m,t)&\leq&
          \prod_{m=1}^{\infty}M_1(0.75^m,0)\\ \\ 
          &<&\prod_{m=1}^{40}M_1(0.75^m,0)=0.1103687051\ldots =:h_1~,\\ \\
          |1+1/x|&=&|1+1/(-t+wi)|=|(1-t)+wi|/|-t+wi|\\ \\
          &=&f(t):=(((1-t)^2+w^2)/(t^2+w^2))^{1/2}\\ \\ &\leq&
          (11/9)^{1/2}=1.105541597\ldots =:h_2
          \end{array}$$
        and $|\Theta ^*|\leq h_1h_2=0.1220171945\ldots <0.123$, see (\ref{eqthetaG}). (It is easy
        to show that the function $f$ is decreasing on $[0,w]$, so
        $f(t)\leq f(0)=(11/9)^{1/2}$.) 
        On the other hand Lemma~\ref{lm147} implies 

$$|\theta |\geq |G|-|\Theta ^*|\geq |G_5|-|G_*|-|\Theta ^*|\geq
        0.147-0.0208-0.123=0.0032>0~.$$

 \end{proof}

\begin{proof}[Proof of Lemma~\ref{lm1w}]
 For $q\in [0.6,0.75]$ fixed and for $t\in [1,w]$, the quantity $M_1$ (see (\ref{eqM1})) 
 is maximal for
        $t=1$ (see Lemma~\ref{lmmaxmodulus}). The quantity $M_1|_{t=1}$ is maximal for $q=0.6$, see Remark~\ref{remM}. Therefore it is true that

        $$
          \prod_{m=1}^{\infty}M_1(q^m,t)\leq
          \prod_{m=1}^{\infty}M_1(0.6^m,1)<\prod_{m=1}^{40}M_1(0.6^m,1)=0.1048026086\ldots =:h_3~.
          $$
        The function $f$ defined in the proof of Lemma~\ref{lm0751} is decreasing and takes its maximal value $h_4:=(9/11)^{1/2}$ for $t=1$. Thus
        $|\Theta ^*|\leq h_3h_4=
        0.09479752467\ldots <0.095$. Using
        Lemma~\ref{lm147}, one deduces that

         $$|\theta|\geq |G_5|-|G_*|-|\theta ^*|\geq 0.147-0.0208-0.095=0.0312>0~.$$
      \end{proof}

\begin{proof}[Proof of Lemma~\ref{lm01}]
  We set $\theta (q,x)=\theta _7^{\bullet}(q,x)+\theta _{*}^{\bullet}(q,x)$, where
  $\theta _7^{\bullet}:=\sum _{j=0}^7q^{j(j+1)/2}x^j$ and
  $\theta _{*}^{\bullet}:=\sum _{j=8}^{\infty}q^{j(j+1)/2}x^j$. The maximal possible
  modulus $|x|$ for $t\in [1,w]$ is obtained for $t=1$. This gives $x= -1+3i/\sqrt{2}$ and  in this case
  $|x|=2.345207880\ldots <2.346$. This means that

  $$|\theta _{*}^{\bullet}|\leq \sum _{j=8}^{\infty}|q|^{j(j+1)/2}|x|^j\leq
    \sum _{j=8}^{\infty}0.75^{j(j+1)/2}2.346^j<0.036~.$$
    On the other hand when setting $x:=-t+iw$, $t\in [0,1]$, and
    $T_I:={\rm Im}(\theta _7^{\bullet}(q,-t+iw))$, 
    one obtains that

     $$\begin{array}{ccl}
      T_I&=&(3\sqrt{2}q/16)(56q^{27}t^6-1260q^{27}t^4+3402q^{27}t^2-729q^{27}-
      48q^{20}t^5+720q^{20}t^3-972q^{20}t\\ \\
      &&+40q^{14}t^4-360q^{14}t^2+162q^{14}-32q^9t^3+144q^9t+24q^5t^2-
      36q^5-16q^2t+8)~.
    \end{array}$$
    We set $T_{I,k}:=\partial ^kT_I/\partial t^k|_{t=0}$. These functions
    are equal respectively to:

    $$\begin{array}{cclccl}
      T_{I,0}&=&-(3\sqrt{2}/16)q(81q^{18}+18q^9-18q^5+4)(9q^9-2)~,&
      T_{I,4}&=&-90\sqrt{2}q^{15}(63q^{13}-2)~,\\ \\
      T_{I,1}&=&-(3\sqrt{2}/4)q^3(243q^{18}-36q^7+4)~,&
     T_{I,5}&=&-1080\sqrt{2}q^{21}~,\\ \\
      T_{I,2}&=&(9\sqrt{2}/4)q^6(567q^{22}-60q^9+4)~,&
      T_{I,6}&=&7560\sqrt{2}q^{28}~.\\ \\
      T_{I,3}&=&18\sqrt{2}q^{10}(45q^{11}-2)~,&&&
    \end{array}$$
    These derivatives do not change sign on the interval $[0.6,0.75]$, with 
    sgn$(T_{I,k})=(-1)^k$. The Taylor series of $T_I$ at $t=0$ reads:

    $$T_I=(T_{I,0}+tT_{I,1})+(t^2/2)(T_{I,2}+tT_{I,3}/3)+(t^4/24)(T_{I,4}+tT_{I,5}/5)+
    T_{I,6}t^6/6!~.$$
    One verifies directly that
    the following inequalities hold true:

     $$
    T_{I,0}>|T_{I,1}|+0.02~,~~~\, T_{I,2}>|T_{I,3}|/3~~~\, {\rm and}~~~\,
    T_{I,4}>|T_{I,5}|/5~.$$
    Hence the Taylor series of $T_I$ 
    takes only values $>0.02$ for $t\in [0,1]$. We need, however,
    a stronger result. It is to be checked directly that
    
    (i) $T_{I,0}-|T_{I,1}|/2>0.2$, so for $t\in [0,1/2]$, one has
    $T_I>0.2$, $\theta _7^{\bullet}>0.2>0.038>|\theta _{*}^{\bullet}|$ and thus
    $|\theta |>0$;

    (ii) $T_{I,2}/2-|T_{I,3}|/6>0.1$, so for $t\in [1/2,1]$, one has

    $$T_I>(T_{I,0}-|T_{I,1}|)+(1/4)(T_{I,2}/2-|T_{I,3}|/6)>0.02+0.025>0.038$$
    and again $|\theta |\geq \theta _7^{\bullet}-|\theta _{*}^{\bullet}|>0$.
  \end{proof}

\end{document}